\documentclass[12pt]{amsart}
\usepackage{amssymb,graphics}
\usepackage{hyperref}
\usepackage{tabularx}
\usepackage{booktabs}
\usepackage{caption}
\usepackage{mathdots}
\usepackage[usenames,dvipsnames]{xcolor}

\newtheorem{thm}{Theorem}[section]
\newtheorem{lem}[thm]{Lemma}

\newtheorem{prop}[thm]{Proposition}
\newtheorem{ex}[thm]{Example}

\newtheorem*{prob*}{Open problem}

\theoremstyle{definition}

\newtheorem{defi}[thm]{Definition}

\theoremstyle{remark}

\newtheorem*{rem*}{Remark}

\DeclareMathOperator{\id}{id}

\DeclareMathOperator{\Aut}{Aut}

\newcommand{\kringel}{\mathbin{\raise1pt\hbox{$\scriptstyle\circ$}}}
\newcommand{\pkt}{\mathbin{\raise0pt\hbox{$\scriptstyle\bullet$}}}

\newcommand{\C}{\mathbb{C}}

\newcommand{\Q}{\mathbb{Q}}

\newcommand{\tr}{\mathop{\rm tr}}
\newcommand{\ad}{{\rm ad}}

\newcommand{\Der}{{\rm Der}}

\newcommand{\diag}{\mathop{\rm diag}}

\newcommand{\La}{\mathfrak{a}}

\newcommand{\Lg}{\mathfrak{g}}
\newcommand{\Lh}{\mathfrak{h}}

\newcommand{\Ll}{\mathfrak{l}}
\newcommand{\Ln}{\mathfrak{n}}

\newcommand{\Lr}{\mathfrak{r}}
\newcommand{\Ls}{\mathfrak{s}}

\newcommand{\al}{\alpha}
\newcommand{\be}{\beta}
\newcommand{\ga}{\gamma}
\newcommand{\de}{\delta}
\newcommand{\ep}{\varepsilon}

\newcommand{\la}{\lambda}
\newcommand{\om}{\omega}

\newcommand{\ov}{\overline}

\newcommand{\ra}{\rightarrow}
\newcommand{\ck}{\checkmark}
\renewcommand{\phi}{\varphi}

\begin{document}


\title[]{Fixed-point-free automorphisms of solvable Lie algebras}
\author[D. Burde]{Dietrich Burde}
\author[K. Dekimpe]{Karel Dekimpe}
\address{Fakult\"at f\"ur Mathematik\\
Universit\"at Wien\\
Oskar-Morgenstern-Platz 1\\
1090 Wien \\
Austria}
\email{dietrich.burde@univie.ac.at}
\address{Katholieke Universiteit Leuven Campus Kulak Kortrijk\\
Etienne Sabbelaan 53\\
8500 Kortrijk\\
Belgium}
\email{karel.dekimpe@kuleuven.be}
\date{\today}

\subjclass[2000]{Primary 17B30, 17B08}
\keywords{Fixed-point-free automorphisms, solvable Lie algebras, lattices in Lie groups}

\begin{abstract}
In this paper, we investigate the existence of fixed-point-free automorphisms for finite-dimensional
Lie algebras. By a result of Jacobson, a Lie algebra admitting a fixed-point-free automorphism
is solvable. We prove that such a Lie algebra must be even strongly unimodular. We find a necessary and
sufficient criterion such that  a complex almost abelian Lie algebra admits a fixed-point-free automorphism.
For complex filiform Lie algebras we show that the existence of a fixed-point-free automorphism is equivalent
to not being characteristically nilpotent. 
\end{abstract}

\maketitle

\section{Introduction}

Fixed-point-free automorphisms of groups and Lie rings have been studied by many authors.
An automorphism $\phi$ of a group $G$ is called f.p.f.\ (fixed-point-free) if $C_G(\phi)=1$,
that is, the only fixed point of $\phi$ is $1$. For Lie rings, an automorphism is called f.p.f., if
$\phi(x)=x$ implies $x=0$ for all $x$.
Higman showed in $1957$ that a finite-dimensional Lie algebra over a field of characteristic zero
admitting an f.p.f.\ automorphism of prime order $p$ is nilpotent. It was then asked for a function $h(p)$,
which is a good bound on the nilpotency class, depending on $p$. 
In $1963$ Kreknin showed that if $\Lg$ admits an an f.p.f.\ automorphism of
any finite order, then $\Lg$ is solvable. Jacobson proved the solvability also without the assumption
that $\phi$ has finite order. It is also known that a finite group admitting an f.p.f.\ automorphism is solvable.
There are many other results, and a detailed survey can be found in the paper \cite{KHU} 
by Khukhro. \\[0.2cm]
In this paper we present some new results on Lie algebras admitting an f.p.f.\ automorphism. 
In section $3$, we show that any such Lie algebra is {\em strongly unimodular}. Here strongly unimodular Lie algebras
form a subclass of unimodular Lie algebras, arising in the context of lattices in Lie groups.
Garland showed in $1966$ that the Lie algebra of a (connected and simply connected) solvable Lie group admitting
a lattice is strongly unimodular. \\[0.2cm]
In section $4$ we classify all complex Lie algebras of dimension $n\le 4$ admitting an f.p.f.\ automorphism.
For such Lie algebras we also determine all possible finite orders of  f.p.f.\ automorphisms. In dimension $4$
we find a one-parameter family of Lie algebras, where all Lie algebras are strongly unimodular, but only
a few of them admit an f.p.f.\ automorphism. This family consists of almost abelian Lie algebras. \\[0.2cm]
In section $5$ we determine, motivated by the family of Lie algebras in dimension $4$, a necessary and sufficient
criterion for the existence of an f.p.f.\ automorphism for complex almost abelian Lie algebras.
It is related to $n$-cyclotomic operators, see Theorem $\ref{5.4}$. \\[0.2cm]
Finally, in section $6$ we prove that a complex filiform Lie algebra admits an f.p.f.\ automorphism if and only if it
is not characteristically nilpotent, if and only if it admits a nonsingular derivation, and if and only if it is a
derived Lie algebra, i.e., it equals $[\Lh,\Lh]$ for some Lie algebra $\Lh$. Here we are using results from a recent
paper \cite{SIT} by Siciliano and Towers.

\section{Preliminaries}

Let $\Lg$ be a finite-dimensional Lie algebra over a field $K$ of characteristic zero.
Denote by $\Aut(\Lg)$ the group of Lie algebra automorphisms of $\Lg$.

\begin{defi}
A Lie algebra automorphism $\phi\in \Aut(\Lg)$ is called {\em fixed-point-free}, or f.p.f.,
if $\phi(x)=x$ implies $x=0$ for all $x\in \Lg$.
\end{defi}

Note that an automorphism $\phi\in \Aut(\Lg)$ is f.p.f., if and only if $\phi-\id$ is 
nonsingular. \\[0.2cm]
It is well known that the existence of an f.p.f.\ automorphism (of finite or infinite order) has
consequences for the algebraic structure of the Lie algebra $\Lg$. For a survey on some results see  
\cite{ZHA}. As we have mentioned in the introduction, Higman \cite{HIG} showed in $1957$, that the 
existence of an f.p.f.\ automorphism of prime order of $\Lg$ implies that $\Lg$ is nilpotent.

\begin{prop}[Higman]
Let $\Lg$ be a finite-dimensional Lie algebra over a field of characteristic zero admitting
an f.p.f. automorphism of order $p$ with $p$ prime. Then $\Lg$ is nilpotent.
\end{prop}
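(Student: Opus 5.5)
We extend scalars to the algebraic closure $\ov{K}$. Both nilpotency of $\Lg$ and the f.p.f.\ property are preserved: $\phi-\id$ stays nonsingular and $\phi^p=\id$ still holds. We then decompose $\Lg$ into eigenspaces of $\phi$. Since $\phi^p=\id$ and the characteristic is zero, $\phi$ is diagonalizable with eigenvalues $p$-th roots of unity. Fix a primitive $p$-th root $\ep$ and write
\[
\Lg=\bigoplus_{i\in \mathbb{Z}/p} \Lg_i,\qquad \Lg_i=\{x\in\Lg \mid \phi(x)=\ep^i x\}.
\]
Because $\phi$ is an automorphism, $[\Lg_i,\Lg_j]\subseteq \Lg_{i+j}$, so this is a $\mathbb{Z}/p$-grading. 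The f.p.f.\ condition says precisely that $\Lg_0=0$.

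The core of the proof is Higman's combinatorial lemma on $\mathbb{Z}/p$-graded Lie rings with trivial zero component. The claim is that there is a number $h=h(p)$ such that every commutator of length $h$ in homogeneous elements vanishes, i.e.\ $\Lg^{h}=0$. Nilpotency follows directly from this. Note that a bound depending on $\dim \Lg$ would suffice here.

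I would argue as follows. Consider a left-normed commutator $[x_{i_1},x_{i_2},\dots,x_{i_m}]$ with $x_{i_k}\in\Lg_{i_k}$ and all $i_k\neq 0$.
\begin{itemize}
\item Look at the partial sums $s_k=i_1+\dots+i_k$ in $\mathbb{Z}/p$. If some $s_k=0$, the initial segment lies in $\Lg_0=0$ and the commutator vanishes.
\item So every nonzero commutator has all partial sums nonzero. This alone is not enough, since any sequence can avoid zero partial sums.
\item Higman's idea is to use the Jacobi identity to rearrange the entries. Given a long commutator, one reorders consecutive entries, modulo commutators of the same length with a different ordering, so that some partial sum hits $0$.
\item Concretely, the entries take at most $p-1$ distinct degrees. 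If $m$ is large, by pigeonhole some degree $a$ occurs many times. The multiples $a,2a,\dots,pa=0$ exhaust $\mathbb{Z}/p$ because $p$ is prime.
\item One then shows, by induction on a suitable ordering of words, that the commutator is a linear combination of commutators with a vanishing partial sum.
\end{itemize}

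This rearrangement argument is the main obstacle, and it is where primality is used: each nonzero degree generates $\mathbb{Z}/p$.

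Alternatively, I would first try a simpler route that suffices over characteristic zero. The automorphism $\phi$ is f.p.f., so by Jacobson's theorem quoted in the introduction, $\Lg$ is solvable. Let $\Ln$ be the nilradical; it is $\phi$-invariant. Take $x\in\Lg_i$ with $i\neq0$ and suppose $\ad x$ has a nonzero eigenvalue $\la$ on $\Lg$. Applying $\phi$ shows that $\phi$ permutes the generalized eigenspaces of the family $\ad(\Lg_i)$. Using the $\mathbb{Z}/p$-grading, $\ad x$ maps $\Lg_j$ to $\Lg_{i+j}$. Hence $(\ad x)^p$ preserves each $\Lg_j$, and $\tr (\ad x)^k=0$ for every $k$ not divisible by $p$. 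The goal is to deduce that each homogeneous $\ad x$ is nilpotent. Then Engel's theorem applied to the linear span, together with Jacobson's weakly closed set version of Engel's theorem for the set $\bigcup_i \Lg_i$ of homogeneous elements, would give nilpotency.

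The hard case in this route is that $(\ad x)^p$ might have nonzero eigenvalues. To rule this out, I would use that $\Lg/\Ln$ is abelian, graded and has zero degree-$0$ part. For solvable $\Lg$ the set of nilpotent elements $\ad x$ is governed by the torus part. One then argues that a graded abelian quotient acting by semisimple parts must commute with the grading in a way that forces a degree-$0$ element, using $\phi$-equivariance of the Jordan decomposition. Should this fail, I fall back on Higman's combinatorial lemma above.
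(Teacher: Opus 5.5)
Your proposal has a genuine gap: in both routes the step you leave open is the entire content. The paper gives no proof of this proposition; it is quoted from Higman, whose argument is essentially the combinatorial one of your first route. So the proposal must stand on its own, and as written it does not. In the first route, the sentence ``one then shows, by induction on a suitable ordering of words, that the commutator is a linear combination of commutators with a vanishing partial sum'' \emph{is} Higman's lemma. Reordering with the Jacobi identity, $[y,x_k,x_{k+1}]=[y,x_{k+1},x_k]+[y,[x_k,x_{k+1}]]$, creates terms whose entries are no longer the original homogeneous elements, and controlling these is the hard part. The pigeonhole observation alone makes no partial sum vanish. In the second route, the identities $\tr((\ad x)^k)=0$ for $p\nmid k$ cannot settle your ``hard case'': a spectrum $\{\mu,\ep\mu,\ldots,\ep^{p-1}\mu\}$ with $\mu\neq 0$ satisfies all of them. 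Your final paragraph (``governed by the torus part'', ``forces a degree-$0$ element'') is not an argument.

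The missing idea is right next to what you wrote, and it makes the second route work. For $x\in\Lg_i$ with $i\neq 0$ you have $\phi\,\ad(x)\,\phi^{-1}=\ep^i\ad(x)$, so $\phi$ maps the generalized eigenspace $E_\mu$ of $\ad(x)$ into $E_{\ep^{-i}\mu}$. Suppose $\mu\neq 0$. Since $p$ is prime and $p\nmid i$, the $p$ spaces $E_{\ep^{-ik}\mu}$, $0\le k\le p-1$, are pairwise distinct. Hence for $0\neq w\in E_\mu$ the vector $w+\phi(w)+\cdots+\phi^{p-1}(w)$ is nonzero, because its $E_\mu$-component is $w$. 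It is fixed by $\phi$ because $\phi^p=\id$, contradicting the f.p.f.\ hypothesis. So $\ad(x)$ is nilpotent for every homogeneous $x$. To conclude you have two options. First, $\{\ad(x) : x \text{ homogeneous}\}$ is closed under commutators and spans $\ad(\Lg)$, so Jacobson's weakly closed set theorem applies directly, and solvability is not even needed. Second, use Jacobson's solvability theorem: for solvable $\Lg$ over an algebraically closed field of characteristic zero, the ad-nilpotent elements are the common kernel of the weights of $\ad$ from Lie's theorem, which is the nilradical $\Ln$. Then every $\Lg_i\subseteq\Ln$ and $\Lg=\Ln$. This is exactly where primality enters. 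For composite order $n$, an element of degree $i$ with $\gcd(i,n)>1$ moves $E_\mu$ only through an orbit of length $d<n$, and $\phi^d$ need not be the identity on $E_\mu$, so the orbit sum need not be fixed. Indeed $\Lr_{3,-1}(\C)$ has f.p.f.\ automorphisms of order $4$.
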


For $p=2$ we have the following result, which is much easier to prove, see \cite{ZHA}. 

\begin{prop}[Zha]\label{2.3}
Suppose that $\Lg$ admits an f.p.f.\ automorphism of order $2$. Then $\Lg$ is abelian.
\end{prop}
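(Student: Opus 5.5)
We prove that an f.p.f.\ automorphism $\phi$ of order $2$ must act as $-\id$, and that $-\id$ can only be an automorphism when the bracket is identically zero.

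First, since $\phi^2=\id$, the polynomial $t^2-1=(t-1)(t+1)$ annihilates $\phi$. Over a field of characteristic zero (indeed of characteristic different from $2$) its roots are distinct, so $\phi$ is diagonalizable, and every $x$ splits as
\[
x=\tfrac12\bigl(x+\phi(x)\bigr)+\tfrac12\bigl(x-\phi(x)\bigr).
\]
This gives $\Lg=\Lg_{1}\oplus\Lg_{-1}$, where $\Lg_{\pm1}=\ker(\phi\mp\id)$. The f.p.f.\ hypothesis says exactly that $\Lg_1=\ker(\phi-\id)=0$. Hence $\Lg=\Lg_{-1}$, that is, $\phi=-\id$.

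Next, $\phi=-\id$ is an automorphism, so for all $x,y\in\Lg$
\[
-[x,y]=\phi([x,y])=[\phi(x),\phi(y)]=[-x,-y]=[x,y].
\]
Thus $2[x,y]=0$. Since the characteristic is zero, $2$ is invertible, so $[x,y]=0$ and $\Lg$ is abelian.

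There is no real obstacle here. The only point needing care is the use of $\operatorname{char}K\neq 2$, which enters twice: for the eigenspace decomposition and in the final cancellation. Note also that the argument does not need $\phi$ to have order exactly $2$; it only uses $\phi^2=\id$.
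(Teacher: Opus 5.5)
Your proof is correct and follows the same strategy as the paper: first show $\phi=-\id$, then use $\phi([x,y])=[-x,-y]=[x,y]$ to force the bracket to vanish. The only difference is in the details. The paper gets $\phi=-\id$ from the surjectivity of $\id-\phi$ (which uses finite-dimensionality) and then concludes directly from $\phi([x,y])=[x,y]$, whereas you use the splitting $x=\tfrac12(x+\phi(x))+\tfrac12(x-\phi(x))$ and divide by $2$.
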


\begin{proof}
Let $\phi$ be an f.p.f.\ automorphism of order $2$. Then the linear transformation 
$\id-\phi$ is surjective, so that every element in $\Lg$ can be written as 
$x-\phi(x)$ for some $x$. Because of
\[
\phi(x-\phi(x))=\phi(x)-\phi^2(x)=-(x-\phi(x))
\]
it follows that $\phi(x)=-x$ for all $x\in \Lg$. This implies
\[
\phi([x,y])=[\phi(x),\phi(y)]=[-x,-y]=[x,y]
\]
for all $x,y\in \Lg$. Thus $[x,y]=0$ for all $x,y\in \Lg$.
\end{proof}

A much more general result is the following one by Kreknin \cite{KRE}, published in $1963$.

\begin{prop}[Kreknin]
Let $\Lg$ be a finite-dimensional Lie algebra over a field of characteristic zero admitting
an f.p.f.\ automorphism of order $n$. Then $\Lg$ is solvable.
\end{prop}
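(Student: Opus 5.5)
We reduce to the semisimple quotient and exploit the finite order of $\phi$ there. We may extend scalars to the algebraic closure, since $\phi\otimes\id$ is still an automorphism of order $n$ and $\phi-\id$ stays nonsingular. So assume $K$ is algebraically closed of characteristic zero.

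Let $\Lr$ be the solvable radical of $\Lg$. It is characteristic, so $\phi(\Lr)=\Lr$ and $\phi$ induces an automorphism $\bar\phi$ of $\Ls=\Lg/\Lr$, again of finite order dividing $n$. The induced map is still fixed-point-free. Indeed, $\phi-\id$ is nonsingular on $\Lg$ and maps $\Lr$ bijectively onto $\Lr$, so it induces a bijection on $\Lg/\Lr$, which is $\bar\phi-\id$. Hence it suffices to show that a semisimple Lie algebra $\Ls\neq 0$ admits no f.p.f.\ automorphism of finite order; then $\Ls=0$ and $\Lg=\Lr$ is solvable.

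For the semisimple case, let $\psi$ be an automorphism of $\Ls$ of finite order. Then $\psi$ is semisimple and we have the eigenspace grading $\Ls=\bigoplus_{\zeta}\Ls_\zeta$ over the $n$-th roots of unity, with $[\Ls_\zeta,\Ls_\eta]\subseteq \Ls_{\zeta\eta}$. Being f.p.f.\ means exactly that $\Ls_1=0$. I would then invoke the classical fact (Borel--Mostow, or Jacobson's theorem on automorphisms of semisimple Lie algebras) that every semisimple automorphism of a semisimple Lie algebra fixes a nonzero element; in fact the fixed-point subalgebra of a finite-order automorphism contains a regular semisimple element or at least has positive dimension.

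The main obstacle is to prove this fixed-point fact self-containedly rather than cite it. A direct route uses the Killing form $\kappa$. It is $\psi$-invariant, so $\kappa(\Ls_\zeta,\Ls_\eta)=0$ unless $\zeta\eta=1$. Take $x\in\Ls_\zeta$ with $\zeta\ne1$. Then $\ad x$ maps $\Ls_\eta$ into $\Ls_{\zeta\eta}$, so with $\Ls_1=0$ the operator $\ad x$ is ``graded-shifting''. I would try to show that every element of $\Ls$ is then ad-nilpotent, by showing $\tr((\ad x)^k)=0$ for all $k$. This is the delicate step, since a shift on a cyclic grading need not be nilpotent. Engel's theorem would then give that $\Ls$ is nilpotent, a contradiction for $\Ls\ne0$.

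If the trace argument fails, I would fall back on the cited theorem. The fixed subalgebra $\Ls^\psi$ of a finite-order automorphism of a nonzero semisimple algebra is nonzero, via Kac's description, or via the existence of a $\psi$-stable Borel subalgebra and Cartan subalgebra on which $\psi$ acts by a diagram automorphism times a torus element; the orbit sums of simple coroots give fixed vectors.
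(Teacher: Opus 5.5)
Your argument is correct modulo one classical theorem, and it follows a genuinely different route from the one behind the paper. The paper gives no proof here; it only cites Kreknin.

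\textbf{Comparison with Kreknin.} Kreknin passes to the eigenspace decomposition of the automorphism, a $\mathbb{Z}/n$-grading with trivial zero component. He then shows by a purely combinatorial argument that such a graded algebra is solvable, with derived length bounded in terms of $n$ alone. This works for Lie rings and uses neither finite-dimensionality nor any structure theory.

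\textbf{Your route.} Your reduction is right: base change, passage to $\Ls=\Lg/\Lr$, and the check that $\bar\phi$ is again f.p.f.\ of finite order. Your fallback for the semisimple case also works once you grant Steinberg's theorem (cf.\ Borel--Mostow): a semisimple automorphism $\psi$ stabilizes a Borel subalgebra $\mathfrak{b}$ and a Cartan subalgebra $\mathfrak{h}\subseteq\mathfrak{b}$. Then $\psi$ permutes the positive roots, hence the simple roots, and sends each coroot $h_{\al}$ to $h_{\pi(\al)}$. So it acts on $\mathfrak{h}\neq 0$ by permuting a basis and fixes the orbit sums.

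\textbf{What each approach buys.} You lose the derived-length bound and the Lie-ring generality. In exchange, finite order is used only to make $\bar\phi$ semisimple. Replacing $\bar\phi$ by its semisimple part (again an automorphism, with the same eigenvalues), the same argument proves Jacobson's theorem for arbitrary f.p.f.\ automorphisms. For that reason, cite Borel--Mostow or Steinberg, not ``Jacobson's theorem''. The latter, as stated in the paper, already contains the proposition you are proving, so citing it would be circular.

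\textbf{Drop the trace route.} The ``direct route'' via traces cannot work. Suppose $n$ is even. Since $\kappa(\Ls_\zeta,\Ls_\eta)=0$ whenever $\zeta\eta\neq 1$ and $\kappa$ is nondegenerate on $\Ls$, $\kappa$ is nondegenerate on $\Ls_{-1}$. So as soon as $\Ls_{-1}\neq 0$ there is $x\in\Ls_{-1}$ with $\tr((\ad x)^2)=\kappa(x,x)\neq 0$. Proving the vanishing of these traces is therefore as hard as proving $\Ls_{-1}=0$, which is the theorem itself.

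The fact that $\ad x$ shifts the grading gives nothing here. It only forces $(\ad x)^n=0$ for $x\in\Ls_\zeta$ with $\zeta$ a primitive $n$-th root of unity, because then every cycle passes through $\Ls_1=0$. For composite $n$, homogeneous elements need not be ad-nilpotent. For example, take the order-$4$ f.p.f.\ automorphism of $\Lr_{3,-1}(\C)$ from Section~4: $e_1$ spans its $(-1)$-eigenspace, yet $\ad e_1$ has eigenvalues $0,\pm 1$.
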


This result is still true if $\Lg$ admits any f.p.f.\ automorphism, not necessarily of finite order.
This has been shown by Jacobson \cite{JAC}, see Theorem $9$.

\begin{prop}[Jacobson]
Let $\Lg$ be a finite-dimensional Lie algebra over a field of characteristic zero admitting
an f.p.f.\ automorphism. Then $\Lg$ is solvable.
\end{prop}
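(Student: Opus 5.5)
We prove Jacobson's statement by passing to a Levi decomposition and showing that a nonzero semisimple part is impossible. Extending scalars to the algebraic closure preserves the hypothesis: $\phi\otimes \id$ is an automorphism of $\Lg\otimes_K \ov{K}$, and $\phi-\id$ stays nonsingular. Solvability of $\Lg\otimes_K\ov{K}$ gives solvability of $\Lg$. So we may assume $K$ is algebraically closed.

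Let $\Lr$ be the solvable radical of $\Lg$. It is characteristic, so $\phi(\Lr)=\Lr$, and $\phi$ induces an automorphism $\bar\phi$ of the semisimple quotient $\Ls=\Lg/\Lr$. Since $\phi-\id$ is nonsingular on $\Lg$ and preserves $\Lr$, its restriction to $\Lr$ is injective and hence bijective. Counting dimensions, the induced map $\bar\phi-\id$ on $\Lg/\Lr$ is also nonsingular. Hence $\bar\phi$ is an f.p.f. automorphism of the semisimple algebra $\Ls$, and it suffices to show that a nonzero semisimple Lie algebra has no f.p.f. automorphism.

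This is the main step. Suppose $\Ls\neq 0$ is semisimple and $\psi\in\Aut(\Ls)$. Take the multiplicative Jordan decomposition $\psi=\psi_s\psi_u$ inside the algebraic group $\Aut(\Ls)$. The fixed space of $\psi$ contains the fixed space of $\psi_s$ intersected with $\ker(\psi_u-\id)$ restricted there. More simply, $\psi$ has eigenvalue $1$ if and only if $\psi_s$ does, because $\psi_u$ is unipotent and commutes with $\psi_s$. So we may assume $\psi$ is semisimple.

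A semisimple automorphism of a semisimple Lie algebra over an algebraically closed field of characteristic zero has nonzero fixed subalgebra $\Ls^\psi$. This is a classical theorem of Borel–Mostow. To justify it directly, I would first reduce to $\psi$ permuting the simple ideals. On a $\psi$-orbit of simple ideals of length $k$, the fixed points correspond to fixed points of $\psi^k$ on one simple ideal, so we may assume $\Ls$ is simple.

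Then $\psi^{-1}$ restricted to the fixed-point question is handled as follows. Write $\psi=\tau\exp(\ad h)$-type, with $\tau$ a diagram automorphism normalizing a Cartan subalgebra $\Lh$ and a Borel subalgebra. After conjugation in $\Aut(\Ls)$, $\psi$ stabilizes $\Lh$ and a Borel subalgebra, using that a semisimple automorphism normalizes some Cartan–Borel pair (Steinberg). Then $\psi$ permutes the simple root vectors $e_{\al_i}$ up to scalars along diagram-automorphism orbits. For an orbit of length $m$, the element $\psi^m$ acts on $\Lh$ in a way compatible with $\tau$.

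The cleanest route is to note that $\psi$ restricted to $\Lh$ equals $\tau|_{\Lh}$, because inner semisimple parts act trivially on their own Cartan subalgebra. The diagram automorphism $\tau$ fixes the nonzero vector $\sum_i h_{\al_i}$, since it permutes the simple coroots. Hence $\Lh^\psi\neq 0$.

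I expect this last point, the existence of a $\psi$-stable Cartan subalgebra on which $\psi$ acts as a diagram permutation, to be the main obstacle. I would cite Steinberg's theorem on semisimple automorphisms rather than reprove it.

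Finally, $\bar\phi$ has no nonzero fixed points, which forces $\Ls=0$. Therefore $\Lg=\Lr$ is solvable.
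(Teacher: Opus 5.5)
The paper does not prove this proposition. It quotes it from Jacobson (Theorem 9 of his paper) and then uses it as a black box, for instance in the first line of the proof that an f.p.f.\ automorphism forces strong unimodularity. Your argument is a correct proof, self-contained apart from one cited theorem, and so it takes a genuinely different route from the paper.

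The reductions are all sound:
\begin{itemize}
\item base change to $\ov{K}$;
\item $\phi$-stability of the radical $\Lr$;
\item bijectivity of $\phi-\id$ on $\Lr$, which makes the induced map on $\Lg/\Lr$ f.p.f.;
\item passage to the semisimple part $\psi_s\in\Aut(\Ls)$, which has the same eigenvalues as $\psi$.
\end{itemize}
What your route buys is an explicit structural reason for the theorem. A semisimple automorphism stabilizing a pair $\Lh\subseteq\mathfrak{b}$ permutes the simple coroots, so it fixes their sum. In fact $\dim\Ls^{\psi}$ is then at least the number of orbits of the induced permutation of the simple roots. What it costs is that essentially all of the difficulty moves into Steinberg's theorem: a semisimple automorphism stabilizes a Borel subalgebra together with a Cartan subalgebra inside it. That is a substantial result from algebraic group theory. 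The paper, by contrast, simply takes Jacobson's theorem as known.

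When you write it up, tighten the key step. The correct justification that $\psi$ permutes the simple coroots is the following computation. Set $\be=\al\circ\psi^{-1}|_{\Lh}$. Then $\psi(h_\al)\in[\Ls_\be,\Ls_{-\be}]$ and $\be(\psi(h_\al))=\al(h_\al)=2$, so $\psi(h_\al)=h_\be$. Combine this with the fact that $\psi(\mathfrak{b})=\mathfrak{b}$ forces $\be$ to be simple whenever $\al$ is.

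This computation should replace the phrase about inner semisimple parts acting trivially on their own Cartan subalgebra. The garbled sentences about $\psi^{-1}$ and about $\tau\exp(\ad h)$ can simply be deleted. The reduction to simple ideals is harmless but unnecessary, since Steinberg's theorem and the coroot argument work for any semisimple $\Ls$.

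Finally, the stable Borel subalgebra is essential; a stable Cartan subalgebra alone does not suffice. The automorphism $e\mapsto -f$, $f\mapsto -e$, $h\mapsto -h$ of $\Ls\Ll_2(\C)$ stabilizes $\C h$ but fixes no nonzero vector of it; its fixed points are spanned by $e-f$.
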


The converse of this proposition is not true.

\begin{ex}\label{2.6}
The $2$-dimensional solvable non-abelian Lie algebra $\Lr_2(K)$ does not admit an f.p.f.\ automorphism.
\end{ex}

Indeed, let $\{e_1,e_2\}$ be a basis, with $[e_1,e_2]=e_1$. Then any automorphism
has the form
\[
\phi=\begin{pmatrix} \al & \be \cr 0 & 1 \end{pmatrix}
\]
for some scalars $\al,\be \in K$. But we have $\det(\phi-\id)=0$, and $\phi$ has a nonzero fixed-point. \\[0.2cm]
Denote by $\Ln$ the nilradical of $\Lg$. Let $\Ln^k=[\Ln,\Ln^{k-1}]$ be the $k$-th term in the descending central
series of $\Ln$ for all $k\ge 1$, with $\Ln^1=\Ln$.

\begin{defi}\label{2.7}
A Lie algebra $\Lg$ is called {\em strongly unimodular}, if it is solvable and if for all $x\in \Lg$ the
induced operator of $\ad(x)$ on each subspace $\Ln^k/\Ln^{k+1}$ has trace zero, for all $k\ge 1$. \\
A Lie algebra $\Lg$ is called {\em unimodular}, if $\tr(\ad (x))=0$ for all $x\in \Lg$.
\end{defi}  

Strongly unimodular Lie algebras arise in the study of lattices in solvable Lie groups.
The following result was proved by Garland \cite{GAR}.

\begin{thm}[Garland]
Let $G$ be a connected and simply connected solvable Lie group admitting a cocompact lattice.
Then its Lie algebra is strongly unimodular.
\end{thm}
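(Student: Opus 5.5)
The plan is to convert the lattice condition into a statement about determinants of the adjoint action on the successive quotients $\Ln^k/\Ln^{k+1}$, and then differentiate. Let $N$ be the nilradical of $G$, the connected normal subgroup with Lie algebra $\Ln$. Let $N^k$ be the connected subgroups with Lie algebras $\Ln^k$; these form the lower central series of $N$. Let $\Gamma\subset G$ be a cocompact lattice.

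\textbf{Step 1: structure of $\Gamma$ relative to $N$.} I will invoke Mostow's theorem for lattices in simply connected solvable Lie groups: $\Gamma\cap N$ is a cocompact lattice in $N$, and $\Gamma N$ is closed in $G$, so $\Gamma N/N$ is a cocompact lattice in $G/N$. Since $[\Lg,\Lg]\subseteq \Ln$, the quotient $G/N$ is abelian, connected and simply connected, hence isomorphic to $\mathbb{R}^m$. Therefore $\Gamma N/N$ is a lattice spanning $\mathbb{R}^m$.

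Next I apply Malcev's theory to the nilpotent group $N$ with lattice $\Gamma\cap N$. This lattice defines a rational structure on $\Ln$, and every $\Ln^k$ is rational for it. Consequently $\Gamma\cap N^k$ is a lattice in $N^k$. Its image $\Lambda_k$ in the vector group $N^k/N^{k+1}\cong \Ln^k/\Ln^{k+1}$ is a full lattice. This step, that is, Mostow's theorem together with the rationality of the lower central series, is the main obstacle; I would take it from the literature rather than reprove it.

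\textbf{Step 2: a character on $G$.} Define $\chi_k(g)=\det\bigl(\mathrm{Ad}(g)|_{\Ln^k/\Ln^{k+1}}\bigr)$. This is well defined because $\mathrm{Ad}(g)$ preserves each characteristic ideal $\Ln^k$, and $\chi_k$ is a continuous homomorphism $G\to \mathbb{R}^\times$. It has two properties:
\begin{itemize}
\item $\chi_k$ is trivial on $N$, because $N$ acts trivially on $N^k/N^{k+1}$ (indeed $[\Ln,\Ln^k]=\Ln^{k+1}$).
\item $|\chi_k|=1$ on $\Gamma$, because conjugation by $\gamma\in\Gamma$ preserves $\Gamma\cap N^k$. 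Hence the induced linear map preserves the full lattice $\Lambda_k$, is given by an integral matrix with integral inverse, and has determinant $\pm1$.
\end{itemize}
So $\log|\chi_k|$ induces a continuous homomorphism $\mathbb{R}^m\cong G/N\to\mathbb{R}$, which is linear and vanishes on the spanning lattice $\Gamma N/N$. It therefore vanishes identically, so $|\chi_k|\equiv 1$ on $G$.

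\textbf{Step 3: differentiate.} For $x\in\Lg$ we have $\chi_k(\exp tx)=\exp\bigl(t\,\tr(\ad(x)|_{\Ln^k/\Ln^{k+1}})\bigr)$. Since this equals $1$ in absolute value for all real $t$, the trace is zero for every $k\ge1$. Together with the solvability of $\Lg$, this is exactly strong unimodularity in the sense of Definition~\ref{2.7}.
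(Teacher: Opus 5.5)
The paper gives no proof of this statement. It is quoted from Garland \cite{GAR} as motivation, so there is nothing in the paper to compare your route against.

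Your argument is correct and is essentially the classical proof. The one deep input is Mostow's theorem that $\Gamma\cap N$ is a lattice in the nilradical $N$. Together with Malcev's rationality of the lower central series, this makes $\Gamma$ act on full lattices $\Lambda_k\subset\Ln^k/\Ln^{k+1}$, which forces $\chi_k(\Gamma)\subseteq\{\pm 1\}$. You were right to flag this as the real content and take it from the literature. The remaining steps all check out:
\begin{itemize}
\item $\chi_k$ is trivial on $N$, via $\mathrm{Ad}(\exp X)=e^{\ad X}$ and $\ad X(\Ln^k)\subseteq \Ln^{k+1}$.
\item The image of $\Gamma$ spans $G/N\cong\mathbb{R}^m$.
\item Differentiating $\chi_k(\exp tx)=e^{t\,\tr(\ad(x)|_{\Ln^k/\Ln^{k+1}})}$ gives trace zero.
\end{itemize}

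You can shorten Step 2. $|\chi_k|$ is a continuous homomorphism $G\to\mathbb{R}_{>0}$ that is trivial on $\Gamma$, so it is constant on cosets $g\Gamma$ and factors through the compact space $G/\Gamma$. Its image is therefore a compact subgroup of $\mathbb{R}_{>0}$, hence $\{1\}$. This removes the need for the closedness of $\Gamma N$, the identification $G/N\cong\mathbb{R}^m$, and the triviality of $\chi_k$ on $N$. You still need Mostow's theorem to get the lattices $\Lambda_k$.

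Your argument also parallels the paper's own Section 3 theorem on f.p.f.\ automorphisms. There, the relation $c^i_{jj}\alpha_i\beta_j=c^i_{jj}\beta_j$ for a semisimple f.p.f.\ $\phi$ plays the role of your integrality of $\mathrm{Ad}(\gamma)$ on $\Lambda_k$. It kills the diagonal of $\ad(a_i)$ on the last term $\Ln^c$ directly, and an induction on the class of $\Ln$ replaces your treatment of all quotients at once through the characters $\chi_k$.
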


It is well known that a strongly unimodular Lie algebra is unimodular. For completeness we will
give a proof here.

\begin{lem}\label{2.9}
A strongly unimodular Lie algebra $\Lg$ is unimodular.
\end{lem}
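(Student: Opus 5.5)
We must show that $\tr(\ad(x))=0$ for every $x\in\Lg$. The plan is to split $\ad(x)$ along the filtration $\Ln=\Ln^1\supseteq \Ln^2\supseteq\cdots\supseteq \Ln^{c+1}=0$, together with the quotient $\Lg/\Ln$, and add up the traces. Each $\Ln^k$ is a characteristic ideal of $\Ln$. Since $\Ln$ is an ideal of $\Lg$, each $\Ln^k$ is therefore an ideal of $\Lg$, so $\ad(x)$ leaves every $\Ln^k$ invariant. Choose a basis of $\Lg$ adapted to the flag
\[
0=\Ln^{c+1}\subseteq \Ln^c\subseteq\cdots\subseteq \Ln^1=\Ln\subseteq \Lg,
\]
where $c$ is the nilpotency class of $\Ln$. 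In this basis $\ad(x)$ is block upper triangular. Hence
\[
\tr(\ad(x))=\tr\bigl(\ad(x)|_{\Lg/\Ln}\bigr)+\sum_{k=1}^{c}\tr\bigl(\ad(x)|_{\Ln^k/\Ln^{k+1}}\bigr).
\]
Each term in the sum vanishes by Definition \ref{2.7}.

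It remains to show that the induced operator of $\ad(x)$ on $\Lg/\Ln$ has trace zero, and this is the one step that needs an argument. Here I use that $\Lg$ is solvable, so $[\Lg,\Lg]$ is a nilpotent ideal and hence $[\Lg,\Lg]\subseteq\Ln$. Then for any $y\in\Lg$ we have $\ad(x)(y)=[x,y]\in\Ln$. So $\ad(x)$ induces the zero map on $\Lg/\Ln$, whose trace is $0$.

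Summing the pieces gives $\tr(\ad(x))=0$ for all $x\in\Lg$, so $\Lg$ is unimodular.
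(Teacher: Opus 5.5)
Your proposal is correct and follows essentially the same route as the paper: you split $\tr(\ad(x))$ along $\Lg\supseteq\Ln\supseteq\Ln^2\supseteq\cdots$ and use $[\Lg,\Lg]\subseteq\Ln$ (from solvability) to see that the induced map on $\Lg/\Ln$ is zero. You also make explicit a point the paper leaves implicit, namely that each $\Ln^k$ is an ideal of $\Lg$, so the block-triangular decomposition is justified.
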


\begin{proof}
As vector space, $\Lg$ corresponds to the direct sum 
\begin{align*}
\Lg/\Ln \oplus \Ln/\Ln^2\oplus \Ln^2/\Ln^3 \oplus \cdots \oplus \Ln^c/\Ln^{c+1}, 
\end{align*}
where $\Ln^{c+1}=0$. Since $\Lg$ is solvable, $[\Lg,\Lg]$ is nilpotent, so that $[\Lg,\Lg]\subseteq \Ln$.
Hence the quotient Lie algebra $\Lg/\Ln$ is abelian, and we have
\begin{align*}
\tr(\ad(x)) & =\tr(\ad(x)_{0})+\tr(\ad(x)_{1})+ \cdots + \tr(\ad(x)_{c}) \\
            & = 0
\end{align*}
for all $x\in \Lg$, where $\ad(x)_i$ ($1\leq i \leq c$) is the induced operator on the quotient $\Ln^{i}/\Ln^{i+1}$ and $\ad(x_0) =0$ (which is the induced operator on $\Lg/\Ln$).
\end{proof}  

The converse statement need not be true.

\begin{ex}
Let $\Lg$ be the solvable non-nilpotent Lie algebra with basis $\{e_1,\ldots ,e_5\}$ and
defining Lie brackets
\vspace*{0.5cm}
\begin{align*}
[e_1,e_5] & = 2\al e_1,  & [e_2,e_5] & = \al e_2+e_3,    & [e_4,e_5]& =-4\al e_4,\\
[e_2,e_3] & = e_1,       & [e_3,e_5] & = -e_2+\al e_3,   &                  \\
\end{align*}
where $\al\in \C^{\times}$. Then $\Lg$ is unimodular but not strongly unimodular,
for all $\al\neq 0$.
\end{ex}

Indeed, the trace of $\ad(e_5)$, induced on $\Ln/\Ln^2={\rm span}\{\bar e_2,\bar e_3,\bar e_4\}$ (with $\bar e_i = e_i + \Ln^2$) equals $2\al\neq 0$.

\section{Fixed-point-free automorphisms and strongly unimodular Lie algebras}

It turns out that there is a strong link between the existence of an f.p.f.\ automorphism 
for a Lie algebra and strong unimodularity. We will show the following result. 

\begin{thm}
Let $\Lg$ be a finite-dimensional Lie algebra over a field of characteristic zero admitting
an f.p.f.\ automorphism. Then $\Lg$ is strongly unimodular.
\end{thm}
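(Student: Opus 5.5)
The plan is to show that each trace functional appearing in Definition~\ref{2.7} is invariant under the automorphism. Since $\phi-\id$ is surjective, such a functional must then vanish.

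First, by Jacobson's result stated above, $\Lg$ is solvable. This is the first requirement in Definition~\ref{2.7}. Let $\phi$ be an f.p.f.\ automorphism and let $\Ln$ be the nilradical.

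Next I check that $\phi$ preserves the whole filtration. The image $\phi(\Ln)$ is a nilpotent ideal, so $\phi(\Ln)\subseteq \Ln$, and equality holds by dimension. By induction, $\phi(\Ln^k)=[\phi(\Ln),\phi(\Ln^{k-1})]=\Ln^k$ for all $k\ge 1$. Hence $\phi$ induces a linear automorphism $\bar\phi_k$ of each quotient $V_k=\Ln^k/\Ln^{k+1}$. For every $x\in\Lg$, the subspaces $\Ln^k$ and $\Ln^{k+1}$ are ideals, so $\ad(x)$ induces an operator $\ad(x)_k$ on $V_k$. Define the linear functional
\[
\la_k\colon \Lg\ra K,\qquad \la_k(x)=\tr(\ad(x)_k).
\]

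The key identity is $\ad(\phi(x))=\phi\kringel\ad(x)\kringel\phi^{-1}$, which holds because $\phi([x,y])=[\phi(x),\phi(y)]$. Restricting it to $\Ln^k$ and passing to the quotient $V_k$ gives
\[
\ad(\phi(x))_k=\bar\phi_k\kringel\ad(x)_k\kringel\bar\phi_k^{-1}.
\]
Taking traces yields $\la_k(\phi(x))=\la_k(x)$, that is, $\la_k\kringel(\phi-\id)=0$.

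Finally, $\phi-\id$ is nonsingular, hence surjective on $\Lg$. Every $y\in\Lg$ can therefore be written as $y=\phi(x)-x$, so $\la_k(y)=0$. Thus $\la_k\equiv 0$ for every $k\ge1$, and $\Lg$ is strongly unimodular.

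I do not expect a real obstacle. The only point needing care is the bookkeeping that $\phi$ maps each $\Ln^k$ onto itself, so that the conjugation formula descends to the quotients $V_k$; this holds because the nilradical and the terms of its lower central series are characteristic. No field extension is needed, since the argument is purely linear and uses only trace invariance under conjugation.
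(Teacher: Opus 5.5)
Your proof is correct, and it takes a genuinely different, coordinate-free route. The paper extends scalars to an algebraically closed field, replaces $\phi$ by its semisimple part, and inducts on the nilpotency class $c$ of $\Ln$ by passing to the quotient $\Lg/\Ln^c$. At each step it picks a $\phi$-eigenbasis adapted to $\Ln^c\subseteq\Ln\subseteq\Lg$ and compares coefficients in $\phi([a_i,b_j])=[\phi(a_i),\phi(b_j)]$. This shows that each diagonal entry of $\ad(a_i)$ on the top layer vanishes, because $\phi(a_i)=\al_i a_i$ with $\al_i\neq 1$. Your identity $\la_k\kringel\phi=\la_k$ is the invariant form of that computation: on an eigenvector it reads $\al_i\la_k(a_i)=\la_k(a_i)$.

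Your version needs no base change and no Jordan decomposition of $\phi$, so you never need that $\phi_s$ is again an f.p.f.\ automorphism. It also needs no induction, so it avoids a point the paper uses without comment: that the nilradical of $\Lg/\Ln^c$ is $\Ln/\Ln^c$. Moreover, your argument applies verbatim to any pair of $\phi$-stable ideals $J\subseteq I$; taking $I=\Lg$ and $J=0$ gives unimodularity directly. The paper's computation gives slightly finer information, namely that the individual diagonal entries vanish in an adapted eigenbasis, but the theorem does not need this.
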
  

\begin{proof}
By Jacobson's theorem in \cite{JAC}, $\Lg$ is solvable. Let $\phi$ be an f.p.f.\ automorphism of $\Lg$.
We may assume that the field $K$ is algebraically closed, and that $\phi$ is semisimple. Indeed, the semisimple
part $\phi_s$ of $\phi$ is f.p.f. if and only if $\phi$ is f.p.f. We will prove the result by induction on
the nilpotency class $c$ of the nilradical $\Ln$ of $\Lg$. \\[0.2cm]
Let $c=1$, i.e., let $\Ln$ be abelian. Since $\phi$ is semisimple and $K$ is algebraically closed, we can
find a basis of eigenvectors $\{a_1,a_2, \ldots , a_m, b_1, b_2, \ldots, b_n\}$ of $\Lg$, such that
$\{ b_1, b_2, \ldots, b_n\}$ is a basis of $\Ln$. So there exist constants $\alpha_i, \beta_j \in K$ with
$\varphi(a_i) = \alpha_i a_i$ and $\varphi(b_j)= \beta_j b_j$.
As $\varphi$ is f.p.f., we have that $\alpha_i$ and $\beta_j$ are neither equal to 0 nor to 1. We have to show
that for each $1\leq i \leq m$, the linear map
\[                                                                                                  
\ad(a_i)_{|\Ln}: \Ln \to \Ln,\;  x \mapsto [a_i,x]                                                     
\]
has trace zero. Let $c^i_{kj}\in K$ be such that
\begin{equation}\label{brackets}                                                                    
[a_i,b_j]=c^i_{1j} b_1 + c^i_{2j} b_2 + \cdots + c^i_{kj} b_k + \cdots + c^i_{nj} b_n.          
\end{equation}
Then we have to show that the matrix $\left( c^i_{kj} \right)_{1\leq k,j \leq n}$ (which is the matrix representation
of $\ad(a_i)_{|\Ln}$) has trace zero for all $i$.
By applying $\varphi$ to both sides of \eqref{brackets}, we find that
\begin{eqnarray*}                                                                                   
\lefteqn{ \alpha_i \beta_j (c^i_{1j} b_1 + c^i_{2j} b_2 + \cdots + c^i_{kj} b_k + \cdots + c^i_{nj} b_n )}\\
                                                     & = &  \alpha_i \beta_j [a_i,b_j]  \\          
                                                     & = & \varphi( [a_i,b_j] ) \\                  
                                                     & = & \varphi (c^i_{1j} b_1 + c^i_{2j} b_2 +
 \cdots + c^i_{kj} b_k + \cdots + c^i_{nj} b_n)\\                                                 
                                                     & = & c^i_{1j} \beta_1 b_1 + c^i_{2j} \beta_2 b_2
                                                           + \cdots + c^i_{kj} \beta_k b_k + \cdots
                                                           + c^i_{nj} \beta_n b_n.                            
\end{eqnarray*}
By comparing the coefficient of $b_j$ in the top and bottom expression above, we find that
\[
c^i_{jj} \alpha_i \beta_j = c^i_{jj}  \beta_j.
\]
Using the fact that $\beta_j\neq 0$ and $\alpha_i \neq 1$, we find that $c^i_{jj}=0$. So the diagonal entries of
$\left( c^i_{kj} \right)_{1\leq k,j \le n}$ are all zero and hence this matrix has trace zero. \\[0.2cm]
We now assume that the nilpotency class of $\Ln$ is  $c>1$ and that the proposition is correct for 
smaller nilpotency classes. Note that $\varphi$ induces an f.p.f.\ automorphism on $\Lg/\Ln^c$ and hence by the
induction hypothesis $\Lg/\Ln^c$ is strongly unimodular. This shows that for all $x\in \Lg$, the map  $\ad(x)$ induces a
linear map with trace zero on all $\Ln^k/\Ln^{k+1}$ for $1 \leq k < c$. So we still have to show that $\ad(x)$ induces
a map with zero trace on $\Ln^c/\Ln^{c+1}= \Ln^c$. We can now fix a basis
$\{a_1,a_2, \ldots , a_m, c_1, c_2, \ldots c_l, b_1, b_2, \ldots, b_n\}$ of $\Lg$ consisting of eigenvectors of $\varphi$
such that $\{ c_1, c_2, \ldots c_l, b_1, b_2, \ldots, b_n\}$ is a basis of $\Ln$ and $\{ b_1, b_2, \ldots, b_n\}$
is a basis of $\Ln^c$. It now suffices to show that the maps
\[                                                                                                  
\ad(a_i)_{|\Ln^c}: \Ln^c \to \Ln^c: x \mapsto [a_i,x]                                               
\]
have trace zero. This follows exactly in the same way as above in the abelian case, which finishes
the proof.
\end{proof}  

The converse of this result does not hold. A strongly unimodular Lie algebra need not admit an
f.p.f.\ automorphism. For a family of such Lie algebras see Example $\ref{4.4}$ in the next section.

\section{A classification in low dimension}

In this section we will give a classification of all complex solvable Lie algebras of dimension $d\le 4$ admitting an
f.p.f. automorphism. It is clear that every complex abelian Lie algebra admits such an automorphism.
Indeed, $\phi=\diag (\zeta_n,\ldots ,\zeta_n)$ is a fixed-point free automorphism of order
$n$ for every $n\ge 2$. Here $\zeta_n$ denotes a primitive $n$-th root of unity. So we may assume that
$\Lg$ is non-abelian.

\begin{prop}\label{4.1}
Let $\Lg$ be a non-abelian complex Lie algebra of dimension $2$. Then $\Lg$ does not admit an f.p.f.\ automorphism.
\end{prop}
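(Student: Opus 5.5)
The plan is to reduce to Example~\ref{2.6} via the classification of $2$-dimensional Lie algebras, and then, for a second independent argument, to use the theorem of Section~3.

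\textbf{Step 1 (classification).} Let $\Lg$ be a non-abelian complex Lie algebra of dimension $2$. Then $[\Lg,\Lg]$ is $1$-dimensional, spanned by some $e_1$. Choose $y$ with $\{e_1,y\}$ a basis. We have $[e_1,y]=\lambda e_1$ with $\lambda\neq 0$, since otherwise $\Lg$ would be abelian. Put $e_2=\lambda^{-1}y$, so that $[e_1,e_2]=e_1$. Hence $\Lg\cong\Lr_2(\C)$.

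\textbf{Step 2 (conclusion via Example~\ref{2.6}).} Example~\ref{2.6} shows that every automorphism of $\Lr_2(K)$ has the upper triangular form with diagonal entries $\al$ and $1$. The only point to verify is this form. Since $\phi([\Lg,\Lg])\subseteq[\Lg,\Lg]$, we get $\phi(e_1)=\al e_1$. Write $\phi(e_2)=\be e_1+\de e_2$. Applying $\phi$ to $[e_1,e_2]=e_1$ gives $\al\de e_1=\al e_1$, and $\al\neq 0$ by bijectivity, so $\de=1$. Thus $\det(\phi-\id)=(\al-1)\cdot 0=0$, and $\phi$ has a nonzero fixed point.

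\textbf{Step 3 (alternative via Section~3).} One could also argue as follows. If $\Lg$ admitted an f.p.f.\ automorphism, then by the theorem of Section~3 it would be strongly unimodular, hence unimodular by Lemma~\ref{2.9}. But in $\Lr_2(\C)$ we have $\tr(\ad(e_2))=-1\neq 0$, a contradiction.

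There is no real obstacle. The only care needed is in Step~1, namely normalizing the bracket so that $\lambda=1$.
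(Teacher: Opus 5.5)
Your proof is correct and takes the paper's route. You identify $\Lg$ with $\Lr_2(\C)$ and invoke Example~\ref{2.6}, and you also spell out two steps the paper leaves implicit: the normalization $[e_1,e_2]=e_1$ and the derivation of the triangular form of automorphisms. Your alternative Step~3 (an f.p.f.\ automorphism forces strong unimodularity, hence unimodularity, while $\tr(\ad(e_2))=-1$) is also valid and gives an independent check.
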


\begin{proof}
Since $\Lg$ is isomorphic to $\Lr_2(\C)$, the claim follows from Example $\ref{2.6}$.
\end{proof}  

In dimension $3$ we use the following classification list.
\vspace*{0.5cm}
\begin{center}
\begin{tabular}{c|c|c}
$\Lg$ & Lie brackets & strongly unimodular\\
\hline
$\C^3$ &  $-$ & $\ck$ \\
$\Ln_3(\C)$ & $[e_1,e_2]=e_3$ & $\ck$ \\
$\Lr_2(\C) \oplus  \C$ & $[e_1,e_2]=e_2$ & $-$ \\
$\Lr_3(\C)$ & $ [e_1,e_2]=e_2, [e_1,e_3]=e_2+e_3 $ & $-$ \\
$\Lr_{3,\la}(\C)$ & $[e_1,e_2]=e_2, [e_1,e_3]=\la e_3,\, \la\neq 0$ & $\ck \Leftrightarrow \la=-1$ \\
$\Ls\Ll_2(\C)$ & $[e_1,e_2]=e_3, [e_1,e_3]=-2 e_1,[e_2,e_3]=2 e_2$ & $-$ \\
\end{tabular}
\end{center}
\vspace*{0.5cm}
Note that $\Lr_{3,\la}(\C)\cong \Lr_{3,\mu}(\C)$ if and only if $\mu=\la^{-1}$, or $\mu=\la$.

\begin{prop}
Let $\Lg$ be a complex non-abelian Lie algebra of dimension $3$. Then
$\Lg$ admits an f.p.f.\ automorphism if and only if $\Lg$ is isomorphic to $\Ln_3(\C)$ or $\Lr_{3,-1}(\C)$.
All of these Lie algebras admit an f.p.f.\ automorphism of finite order, and
the possible orders are as follows.
\begin{itemize}
\item[(1)] $\Ln_3(\C)$ admits an f.p.f.\ automorphism of order $n$ for all $n\ge 3$.
\item[(2)] $\Lr_{3,-1}(\C)$ admits an f.p.f.\ automorphism of order $2m$ for all $m\ge 2$. 
\end{itemize}  
\end{prop}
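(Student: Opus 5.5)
We reduce to the strongly unimodular algebras, show that $\Ln_3(\C)$ and $\Lr_{3,-1}(\C)$ have fixed-point-free automorphisms, and then determine the possible finite orders of such automorphisms on each.

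\textbf{Reduction.} By the theorem of Section 3, a Lie algebra with an f.p.f.\ automorphism is strongly unimodular. The table shows that among the non-abelian algebras of dimension $3$ only $\Ln_3(\C)$ and $\Lr_{3,-1}(\C)$ are strongly unimodular. This already excludes $\Lr_2(\C)\oplus\C$, $\Lr_3(\C)$, $\Lr_{3,\la}(\C)$ with $\la\neq -1$, and $\Ls\Ll_2(\C)$; Jacobson's theorem gives the last exclusion independently. It remains to show that the two surviving algebras admit f.p.f.\ automorphisms, and to find the possible finite orders.

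\textbf{The case $\Ln_3(\C)$.} Every automorphism preserves the center $\C e_3 = [\Lg,\Lg]$, so in the basis $e_1,e_2,e_3$ it is block triangular. It is given by $A\in GL_2(\C)$ on $\mathrm{span}\{e_1,e_2\}$ modulo the center, acts by $\det A$ on $e_3$, and has an arbitrary off-diagonal part $e_i\mapsto Ae_i+\ast\, e_3$. The eigenvalues of $\phi$ are therefore $\la_1,\la_2,\la_1\la_2$, where $\la_1,\la_2$ are the eigenvalues of $A$. Hence $\phi$ is f.p.f.\ if and only if $\la_1\ne1$, $\la_2\ne1$ and $\la_1\la_2\ne1$.

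For order $n\ge3$, take $\phi=\diag(\zeta_n,\zeta_n,\zeta_n^2)$; since $n\ge3$ we have $\zeta_n^2\ne1$, so $\phi$ is f.p.f.\ of order $n$. For $n=2$, Proposition \ref{2.3} rules out order $2$, since $\Ln_3(\C)$ is non-abelian. Order $1$ is trivially impossible.

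\textbf{The case $\Lr_{3,-1}(\C)$.} Here $[e_1,e_2]=e_2$ and $[e_1,e_3]=-e_3$, the nilradical is $\Ln=\mathrm{span}\{e_2,e_3\}$, and it is abelian. An automorphism maps $e_1\mapsto \ep e_1+n$ with $n\in\Ln$. Comparing eigenvalues of $\ad$, it maps $e_1$ modulo $\Ln$ to $\pm e_1$. The sign $+$ fixes $e_1$ modulo $\Ln$ and so always yields eigenvalue $1$, hence is excluded; we need $\ep=-1$. Then $\ad(\phi(e_1))=-\ad(e_1)$ on $\Ln$, so $\phi$ swaps the eigenspaces $\C e_2$ and $\C e_3$: $\phi(e_2)=a e_3$ and $\phi(e_3)=b e_2$ with $ab\ne0$. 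The resulting matrix has eigenvalues $-1$ and $\pm\sqrt{ab}$, so it is f.p.f.\ if and only if $ab\ne1$.

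For the finite orders, take $n=0$ so that $\phi$ is diagonalizable. The order of $\phi$ is then $\mathrm{lcm}(2,\operatorname{ord}(\sqrt{ab}))$, where the order of $\phi^2=\diag(1,ab,ab)$ on $\mathrm{span}\{e_2,e_3\}$ determines it. Choosing $ab=\zeta_m$ with $m\ge2$ gives order exactly $2m$.

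Conversely, a finite order $N$ must be even, because $\phi$ acts by $-1$ on $\Lg/\Ln$. Order $2$ is impossible by Proposition \ref{2.3}. For finite order we also need $ab$ to be a root of unity different from $1$, so $m\ge2$ and the order is $2m$ with $m\ge 2$.

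\textbf{Main obstacle.} The step needing most care is the automorphism computation for $\Lr_{3,-1}(\C)$, in particular checking that a nilpotent shift $n\ne0$ does not change the order. Since the order of a finite-order automorphism is the order of its semisimple part, it suffices to work with diagonalizable $\phi$, that is, with $n=0$.
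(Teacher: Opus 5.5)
Your proposal is correct and follows essentially the paper's route. Strong unimodularity reduces the list to $\Ln_3(\C)$ and $\Lr_{3,-1}(\C)$, and $\diag(\zeta_n,\zeta_n,\zeta_n^2)$ handles the Heisenberg algebra. For $\Lr_{3,-1}(\C)$, the forced $-1$ on $\Lg/\Ln$ rules out odd orders, and $ab=\zeta_m$ realizes order $2m$ (the paper takes $a=1$, $b=\zeta_m$). Your derivation of the general form of these automorphisms and your explicit exclusion of order $2$ via Proposition \ref{2.3} fill in details the paper states without proof.
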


\begin{proof}
The only $3$-dimensional strongly unimodular Lie algebras are $\C^3$, the Heisenberg Lie algebra
$\Ln_3(\C)$ and the solvable, non-nilpotent Lie algebra $\Lr_{3,-1}(\C)$. It is easy to see that these
Lie algebras admit an f.p.f.\ automorphism. In fact,
$\phi=\diag (\zeta_n,\zeta_n,\zeta_n^2)$ is an f.p.f.\ automorphism of order $n$ for every $n\ge 3$ for the
Heisenberg Lie algebra. Every f.p.f.\ automorphism of $\Lr_{3,-1}(\C)$ is of the form
\[
\phi=\begin{pmatrix} -1 & 0 & 0 \cr \al & 0 & \be \cr \ga & \de & 0 \end{pmatrix}
\]
with $\det(\phi)=\be\de\neq 0$ and $\det(\phi-\id)=2(\be\de-1)\neq 0$. It is obvious, that $\phi^{2m+1}$
cannot be the identity, because its entry at position $(1,1)$ equals $-1$, and not $1$.
On the other hand, for a primitive $m$-th root of unity $\zeta_m$,
\[
\phi=\begin{pmatrix} -1 & 0 & 0 \cr 0 & 0 & \zeta_m \cr 0 & 1 & 0 \end{pmatrix}
\]
is an automorphism of $\Lr_{3,-1}(\C)$ and has order $2m$ for all $m\ge 1$. It is f.p.f. if $1\cdot \zeta_m\neq 1$,
i.e., for all $m\ge 2$.
\end{proof}

In dimension $4$ we use the following classification from \cite{AGA}.
\vspace*{0.5cm}
\begin{center}
\begin{tabular}{c|c|c}
$\Lg$ & Lie brackets & s.u.\\
\hline
$\Lg_0=\C^4$ &  $-$  & $\ck$ \\
$\Lg_1=\Ln_3(\C)\oplus \C$ & $[e_1,e_2]=e_3$ & $\ck$ \\
$\Lg_2=\Ln_4(\C)$ & $[e_1,e_2]=e_3,\, [e_1,e_3]=e_4$ & $\ck$ \\
$\Lg_3=\Lr_2(\C) \oplus  \C^2$ & $[e_1,e_2]=e_2$ & $-$ \\
$\Lg_4=\Lr_2(\C) \oplus \Lr_2(\C)$ & $[e_1,e_2]=e_2, \,[e_3,e_4]=e_4$ & $-$ \\
$\Lg_5=\Ls\Ll_2(\C) \oplus \C$ & $[e_1,e_2]=e_2, \,[e_1,e_3]=-e_3,\,[e_2,e_3]=e_1$ & $-$ \\
$\Lg_6$ & $[e_1,e_2]=e_2, \,[e_1,e_3]=e_3,\,[e_1,e_4]=e_4$ & $-$ \\
$\Lg_7(\al)$ & $[e_1,e_2]=e_2, \,[e_1,e_3]=e_3,\,[e_1,e_4]=e_3+\al e_4$ & $\ck \Leftrightarrow \al=-2$\\
$\Lg_8$ & $[e_1,e_2]=e_2, \,[e_1,e_3]=e_3,\,[e_1,e_4]=2e_4,\,[e_2,e_3]=e_4$ & $-$ \\
$\Lg_9(\al,\be)$ & $[e_1,e_2]=e_2, \,[e_1,e_3]=e_2+\al e_3,\,[e_1,e_4]=e_3+\be e_4$ & $\ck \Leftrightarrow \al+\be=-1$\\
$\Lg_{10}(\al)$ & $[e_1,e_2]=e_2, \,[e_1,e_3]=e_2+\al e_3,\,[e_1,e_4]=(\al +1)e_4,$ & $\ck \Leftrightarrow \al=-1$\\
               & $[e_2,e_3]=e_4$ & \\
\end{tabular}
\end{center}
\vspace*{0.5cm}
Here $\Lg_{10}(\al)\cong \Lg_{10}(\al')$ if and only if $\al\al'=1$ or $\al=\al'$,
and $\Lg_9(\al_1,\be_1)\cong \Lg_9(\al_2,\be_2)$ if and only if the ratios
$1:\al_1:\be_1$ and $1:\al_2:\be_2$ coincide, after some permutation.
For $\al,\be\neq 0$ this means,
\[
\Lg_9(\al,\be) \cong \Lg_9(\al',\be')
\]
if and only if $(\al',\be')$ is equal to one of the following:
\[
(\al,\be),\; (\be,\al),\; 
\left( \frac{1}{\al}, \frac{\be}{\al} \right),\;
\left( \frac{\be}{\al}, \frac{1}{\al} \right),\; 
\left( \frac{1}{\be}, \frac{\al}{\be} \right),\;
\left( \frac{\al}{\be}, \frac{1}{\be} \right).
\]
Note that $\Lg_9(-1,0)\cong \Lr_{3,-1}(\C)\oplus \C$. 
The classification of $4$-dimensional Lie algebras admitting an f.p.f.\ automorphism is as follows.

\begin{prop}\label{4.3}
Let $\Lg$ be a non-abelian $4$-dimensional complex Lie algebra. Then
$\Lg$ admits an f.p.f.\ automorphism if and only if $\Lg$ is isomorphic to $\Ln_3(\C)\oplus \C$, $\Ln_4(\C)$,
$\Lr_{3,-1}(\C)\oplus \C$, $\Lg_9(\om,\om^2)$, or $\Lg_{10}(-1)$, where $\om$ is a primitive third root of unity.
All of these Lie algebras admit an f.p.f.\ automorphism of finite order, and
the possible orders are as follows.
\begin{itemize}
\item[(1)] $\Ln_3(\C)\oplus \C$ admits an f.p.f.\ automorphism of every order $n\ge 3$.
\item[(2)] $\Ln_4(\C)$ admits an f.p.f.\ automorphism  of every order $n\ge 4$.
\item[(3)] $\Lr_{3,-1}(\C)\oplus \C$ admits an f.p.f.\ automorphism of every order $2m$, with $m\ge 2$.
\item[(4)] $\Lg_9(\om,\om^2)$ admits an f.p.f.\ automorphism of every order $3m$, with $m\ge 2$.
\item[(5)] $\Lg_{10}(-1)$ admits an f.p.f.\ automorphism of every order order $2m$, with $m\ge 3$.
\end{itemize}
\end{prop}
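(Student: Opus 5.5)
The plan is to combine the theorem of Section~3 (an f.p.f.\ automorphism forces strong unimodularity) with a direct analysis of the remaining candidates. From the table, the non-abelian strongly unimodular algebras are $\Lg_1=\Ln_3(\C)\oplus\C$, $\Lg_2=\Ln_4(\C)$, $\Lg_7(-2)$, $\Lg_9(\al,\be)$ with $\al+\be=-1$ (which includes $\Lg_9(-1,0)\cong\Lr_{3,-1}(\C)\oplus\C$), and $\Lg_{10}(-1)$. Every other algebra in the list is excluded immediately.

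For the non-nilpotent candidates I use one structural observation. Since $[\Lg,\Lg]\subseteq\Ln$ and $\Lg/\Ln$ is spanned by $\bar e_1$, any automorphism satisfies $\phi(e_1)=\la e_1+n_0$ with $n_0\in\Ln$ and $\la\neq 0$, and $\phi(\Ln)=\Ln$. Being f.p.f.\ forces $\la\neq 1$, since $\la$ is the eigenvalue induced on $\Lg/\Ln$. Restricting to $\Ln$ gives
\[
\phi\,\ad(e_1)|_{\Ln}\,\phi^{-1}=\ad(\phi(e_1))|_{\Ln}=\la\,\ad(e_1)|_{\Ln}+\ad(n_0)|_{\Ln}.
\]
Passing to the graded pieces $\Ln^k/\Ln^{k+1}$, on which $\ad(n_0)$ acts trivially, shows that the spectrum of $\ad(e_1)$ on each piece (with multiplicities) is invariant under multiplication by $\la$.

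I then apply this to each case.
\begin{itemize}
\item For $\Lg_7(-2)$ the multiset $\{1,1,-2\}$ is stable under multiplication by $\la$ only if $\la=1$, so no f.p.f.\ automorphism exists.
\item For $\Lg_9(\al,\be)$ the multiset is $\{1,\al,\be\}$. If it contains $0$, the nonzero part must be $\{1,-1\}$, which gives $\Lg_9(-1,0)$. Otherwise $\la$ acts as a nontrivial permutation of the set, so $\la^2=1$ or $\la^3=1$. The case $\la=-1$ would require an element fixed up to sign, which is incompatible with $\al+\be=-1$. The case $\la^3=1$ forces the set $\{1,\om,\om^2\}$, and the condition $\al+\be=-1$ holds there automatically.
\item For $\Lg_{10}(-1)$ the spectra on $\Ln/\Ln^2$ and $\Ln^2$ are $\{1,-1\}$ and $\{0\}$, which forces $\la=-1$.
\end{itemize}

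Existence comes from explicit automorphisms. For $\Lg_1$ and $\Lg_2$ I use diagonal automorphisms coming from gradings:
\begin{itemize}
\item $\Lg_1$: $\diag(\zeta_n,\zeta_n,\zeta_n^2,\zeta_n)$.
\item $\Lg_2$: $\diag(\zeta_n,\zeta_n^{b},\zeta_n^{1+b},\zeta_n^{2+b})$ for a suitable $b$ (for instance $b=1$ when $n\ge 4$).
\end{itemize}
For $\Lr_{3,-1}(\C)\oplus\C$ I take the automorphism of the $3$-dimensional case extended by a scalar on the center. For $\Lg_9(\om,\om^2)$ and $\Lg_{10}(-1)$ I first change basis to diagonalize $\ad(e_1)$ on $\Ln$. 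Then I define $\phi(e_1)=\la e_1$ and let $\phi$ permute the eigenvectors of $\ad(e_1)$ cyclically according to multiplication by $\la$, with scalars $\zeta_m$ inserted as in the $\Lr_{3,-1}$ example. This produces order $3m$ in the first case and $2m$ in the second. I must check two things for each construction: that the brackets are preserved, which for $\Lg_{10}$ includes $[e_2,e_3]=e_4$, and that $\det(\phi-\id)\neq 0$.

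For the order restrictions, an automorphism of finite order is semisimple. The order must therefore be a multiple of $\operatorname{ord}(\la)$, which is $2$, $3$ or $2$ in the three non-nilpotent cases. For the small orders I use eigenvalue bookkeeping.
\begin{itemize}
\item $\Lg_1$: with eigenvalues $\mu_1,\mu_2$ on a complement of $[\Ln,\Ln]$, the element $\mu_1\mu_2$ is also an eigenvalue. Order $2$ is excluded by Proposition~\ref{2.3}.
\item $\Lg_2$: the eigenvalues are $\mu_1,\mu_2,\mu_1\mu_2,\mu_1^2\mu_2$, all of which must differ from $1$. An order $n\le 3$ would need a $b\not\equiv 0,-1,-2 \pmod n$ with $\mu_1$ primitive, which is impossible.
\item $\Lg_9(\om,\om^2)$: order $3$ itself fails because the cyclic permutation then has eigenvalue $1$.
\item $\Lg_{10}(-1)$: orders $2$ and $4$ fail because the scalar on $e_4$ is forced to be $1$.
\end{itemize}

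I expect the main obstacle to be the last item, namely pinning down exactly which orders occur for $\Lg_{10}(-1)$ and $\Lg_9(\om,\om^2)$. This requires parametrizing the f.p.f.\ semisimple automorphisms concretely and tracking how the relation $[e_2,e_3]=e_4$ constrains the eigenvalue on $e_4$.
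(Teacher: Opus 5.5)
Your proposal is correct, and it reaches the classification by a more uniform route than the paper does.

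The paper handles the candidates one at a time:
\begin{itemize}
\item $\Lg_7(-2)$ by a ``short computation'';
\item $\Lg_9(\al,-1-\al)$ via the cyclotomic criterion of Section~5 (Theorem~\ref{5.4} plus a comparison of characteristic polynomials);
\item $\Lg_{10}(-1)$ by writing out its general f.p.f.\ automorphism by hand;
\item the orders for $\Lg_9(\om,\om^2)$ by ``direct computation''.
\end{itemize}

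You replace all of this with one identity, $\phi\,\ad(e_1)\,\phi^{-1}=\la\,\ad(e_1)+\ad(n_0)$. It makes the spectrum of $\ad(e_1)$ on each $\Ln^k/\Ln^{k+1}$ stable under multiplication by $\la\neq 1$. This is the same commutation relation that drives the proof of Theorem~\ref{5.4}, read on the spectrum of $\ad(e_1)$ rather than on the eigenspaces of $\phi$. In one stroke it:
\begin{itemize}
\item excludes $\Lg_7(-2)$;
\item excludes every $\Lg_9(\al,-1-\al)$ except $\{1,\om,\om^2\}$ and $\{1,-1,0\}$;
\item forces $\la=-1$ for $\Lg_{10}(-1)$, which is not almost abelian, so Section~5 does not reach it;
\item shows that any finite order is divisible by $\operatorname{ord}(\la)$.
\end{itemize}

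The paper's route buys a criterion for all almost abelian algebras. Yours buys exclusions of small orders that hold for every automorphism, not just a displayed family. For $\Lg_{10}(-1)$, take swap weights $a,b$ on the eigenvectors $e_2,\ e_2-2e_3$; the eigenvalues are then $-1,\pm\sqrt{ab},-ab$. Order dividing $4$ together with $ab\neq 1$ forces $ab=-1$, hence eigenvalue $1$ on $e_4$, for any automorphism. Your normalization $ab=\zeta_m$ gives an f.p.f.\ automorphism of order exactly $2m$ for every $m\ge 3$. By contrast, the paper's displayed matrix has eigenvalues $-1,\zeta_m,-\zeta_m,-\zeta_m^2$, so it has order $m$ when $m$ is even and is not f.p.f.\ for $m=4$.

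There are two slips to fix when you write this out.

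First, on the abelian $\Ln$ of $\Lg_9(\om,\om^2)$, the relation $\phi\,\ad(e_1)\,\phi^{-1}=\la\,\ad(e_1)$ sends the $\mu$-eigenvector to a multiple of the $\la^{-1}\mu$-eigenvector, not the $\la\mu$-eigenvector. With $\la=\om$ you need $f_\mu\mapsto c_\mu f_{\om^2\mu}$; otherwise the bracket check fails. Because this form is forced for every automorphism, you get $\phi^3|_{\Ln}=c_1c_\om c_{\om^2}\,\id$, and that is how order $3$ is excluded in general.

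Second, in the zero-free $\Lg_9$ case, $\la=-1$ is impossible because three nonzero numbers cannot form a negation-stable multiset (the orbits have size $2$). It has nothing to do with $\al+\be=-1$; negation-stability would in fact imply that condition.
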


\begin{proof}
The $4$-dimensional strongly unimodular Lie algebras are given by $\C^4$, $\Ln_3(\C)\oplus \C$,
$\Ln_4(\C)$, $\Lg_7(-2)$, $\Lg_9(\al,\be)$ with $\al+\be+1=0$, and $\Lg_{10}(-1)$. It is easy to see that
$\Lg_7(-2)$ does not admit an f.p.f.\ automorphism. In fact, a short computation shows that $1$ must be an eigenvalue.
The algebra $\Ln_3(\C)\oplus \C$ certainly admits an f.p.f.\ automorphism of every order $n\ge 3$, and cannot admit
one of order $2$, because it is non-abelian. For $\Ln_4(\C)$, the semisimple automorphisms are of the form
$\phi=\diag (s,t,st,s^2t)$. The possible orders of such $\phi$, which are f.p.f., are all $n\ge 4$. We may
take $s=t=\zeta_n$. \\[0.2cm]
The Lie algebras  $\Lg_9(\al,\be)$ with $\al+\be+1=0$ admit an f.p.f.\ automorphism if and only if $\al$ is a primitive
third root of unity and $\be=\al^2$, see also section $5$. 
For  $\Lg_9(\om,\om^2)$ one can verify by a direct computation, that the f.p.f.\ automorphisms $\phi$
have entry $(1,1)$ different from $1$ for powers $m>1$, which are not a multiple of $3$. On the other hand,
consider the following automorphisms, with complex parameter $\la$,
\[
\phi=\begin{pmatrix} \om & 0 & 0 & 0 \cr 0 & \la & 0 & 0 \cr 0 & (\om^2-1)\la & \om^2\la & 0 \cr
0 & 3\om\la & (\om-1)\la & \la\om
\end{pmatrix}
\]
We have $\det(\phi)=\om\la^3$ and $\det(\phi-\id)=(\om-1)(\la^3-1)$. Hence we need
$\la\neq 0,\la^3\neq 1$. We have $\phi^{3m}=\id$ if and only if $\la^{3m}=1$. So we can take $\la=\zeta_{3m}$, where we need
$m\ge 2$ because of $\la^3\neq 1$. \\[0.2cm]
Finally, For $\Lg_{10}(-1)$, every f.p.f.\ automorphism is of the form
\[
\phi=\begin{pmatrix} -1 & 0 & 0 & 0 \cr \al & \be & \ga & 0 \cr \de & -2\be & -\be & 0 \cr
\ep & -\be(2\al+\de) & \ga\de-\be(\al+\de) & \be(2\ga-\be)
\end{pmatrix}
\]
satisfying
\begin{align*}
\be(2\ga-\be) & \neq 0, \\
(2\be\ga-\be^2+1)(2\be\ga-\be^2-1) & \neq 0.
\end{align*}
Every odd power greater than $1$ of $\phi$ has coefficient $-1$ at the entry $(1,1)$. Hence $\phi$ cannot have odd order.
On the other hand,
\[
\phi=\begin{pmatrix} -1 & 0 & 0 & 0 \cr 0 & \zeta_m & 0 & 0 \cr 0 & -2\zeta_m & -\zeta_m & 0 \cr
0 & 0 & 0 & -\zeta_m^2
\end{pmatrix}
\]
is an f.p.f.\ automorphism of $\Lg_{10}(-1)$ of order $2m\ge 6$. For $m=2$, we have $\det(\phi-\id )=0$, which is
a contradiction.
\end{proof}

At the end of section $3$ we have mentioned that there are families of strongly nilpotent Lie algebras, which do
not admit an f.p.f.\ automorphism. We are able to present such a family now by considering the algebras
$\Lg_9(\al,\be)$.

\begin{ex}\label{4.4}
The Lie algebras $\Lg_9(\al,-1-\al)$ are strongly unimodular for all  $\al \in \C$, but do not
always admit an f.p.f.\ automorphism.   
\end{ex}  

Note that the Lie algebras $\Lg_9(\al,\be)$ are {\em almost abelian}. We will study f.p.f.\ automorphisms of such
Lie algebras in the next section.

\section{Automorphisms of almost abelian Lie algebras}

For this section, we assume that all algebras are defined over the field of complex numbers.
However, let us recall the definition of an almost abelian Lie algebra over an arbitrary field $K$.

\begin{defi}
A finite-dimensional Lie algebra $\Lg$ over a field $K$ is called {\em almost abelian}, if it is
non-abelian and has an abelian ideal of codimension $1$.
\end{defi}  

If $\Lg$ is almost abelian, with $\dim(\Lg)=n$, then $\Lg\cong \La \rtimes \langle v \rangle$,
where $\La$ is abelian of dimension $n-1$ and $v\neq 0$ in $\Lg$. An almost abelian Lie algebra
is $2$-step solvable. Its nilradical is given by $\Ln=\La$, so that $\Ln^2=0$ and
$\tr (\ad(x))=\tr (\ad(x)_{\mid \Ln})$. Hence an almost abelian Lie algebra is strongly unimodular if and only
if it is unimodular.

\begin{defi}\label{5.2}
Let $A\in M_m(\C)$ and $n\ge 2$ be an integer. We say that $A$ is {\em $n$-cyclotomic} if there exists a matrix
$B\in M_k(\C)$ with $m=kn$ such that $A$ is similar to the block matrix
\[
B_{\zeta_n}=  
\begin{pmatrix}
B & 0 & 0 & \cdots & 0 \cr
0 & \zeta_n B & 0 & \cdots & 0 \cr
0 & 0 & \zeta_n^2 B & \cdots & 0 \cr
\vdots & \vdots & \vdots & \ddots & \vdots \cr
0 & 0 & 0 & \cdots & \zeta_n^{n-1}B 
\end{pmatrix}  
\]  
\end{defi}  

We assume that $n\ge 2$, because otherwise every matrix $A\in M_m(\C)$ is $1$-cyclotomic. Note that every
$n$-cyclotomic matrix $A$ has trace zero, because we have
\[
\tr (A)=(1+\zeta_n+\zeta_n^2+\cdots +\zeta_n^{n-1})\tr (B)=0.
\]  
Furthermore $A$ is invertible if and only if $B$ is invertible, because we have
\[
\det(A)=(1\cdot \zeta_n\cdot \zeta_n^2\cdots \zeta_n^{n-1})^k\det(B)^n=\pm \det(B)^n.
\]  
Denote by $I_k$ the identity matrix of size $k$.

\begin{lem}\label{5.3}
Let $B\in GL_m(\C)$ and $n\ge 2$. Then $B$ is $n$-cyclotomic if and only if there exists an
invertible matrix $A\in GL_k(\C)$ with $m=kn$, such that $B$ is similar to
\[
M_A=  
\begin{pmatrix}
0 & 0 & 0 & \cdots & 0 & A \cr
I_k & 0 & 0 & \cdots & 0 & 0 \cr
0 & I_k & 0 & \cdots & 0 & 0 \cr
\vdots & \vdots & \vdots & \ddots & \vdots & \vdots \cr
0 & 0 & 0 & \cdots & I_k & 0 
\end{pmatrix}  
\]  
\end{lem}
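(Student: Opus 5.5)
The key object is the matrix $M_A$. I will show that $M_A^n=\diag(A,A,\ldots,A)$, and then compute its eigenstructure by a discrete Fourier transform in the block index. Note that the roles of the letters are swapped relative to Definition \ref{5.2}. In the lemma, $B\in GL_m(\C)$ is the given matrix, and being $n$-cyclotomic means that $B$ is similar to $C_{\zeta_n}=\diag(C,\zeta_nC,\ldots,\zeta_n^{n-1}C)$ for some $C\in M_k(\C)$. Since $B$ is invertible, $C$ must be invertible, by the determinant formula noted after Definition \ref{5.2}.

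\emph{From $C_{\zeta_n}$ to $M_A$.} I will show that $M_A$ is similar to $C_{\zeta_n}$ whenever $A=C^n$. Work on $V=(\C^k)^n$ with block coordinates $(v_0,\ldots,v_{n-1})$. The matrix $M_A$ sends $(v_0,\ldots,v_{n-1})$ to $(Av_{n-1},v_0,\ldots,v_{n-2})$.

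For each $j$ and each $w\in\C^k$, consider the vector
\[
x_j(w)=\bigl(w,\ \zeta_n^{-j}C^{-1}w,\ \zeta_n^{-2j}C^{-2}w,\ \ldots,\ \zeta_n^{-(n-1)j}C^{-(n-1)}w\bigr).
\]
A direct check gives $M_A\,x_j(w)=x_j(\zeta_n^jCw)$. The only block that needs care is the first one, where we get $A\zeta_n^{-(n-1)j}C^{-(n-1)}w=\zeta_n^jCw$; this uses $A=C^n$ and $\zeta_n^n=1$. Hence the subspace $X_j=\{x_j(w)\}$ is $M_A$-invariant, and $M_A$ acts on it as $\zeta_n^jC$ does on $\C^k$.

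It remains to see that $V=X_0\oplus\cdots\oplus X_{n-1}$. For this I apply the block change of coordinates $\diag(I,C,\ldots,C^{n-1})$. After it, the vectors $x_j(w)$ become $\bigl(\zeta_n^{-ij}w\bigr)_i$, and these span $V$ because the Vandermonde/DFT matrix $(\zeta_n^{-ij})$ is invertible. Therefore $M_{C^n}\sim C_{\zeta_n}$.

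\emph{The converse.} Suppose $B\sim M_A$ with $A\in GL_k(\C)$. Every invertible complex matrix has an $n$-th root: use the Jordan form together with a holomorphic branch of $z^{1/n}$ near each (nonzero) eigenvalue, or equivalently $C=\exp(\tfrac1n\log A)$. Choose $C\in GL_k(\C)$ with $C^n=A$. By the first part, $B\sim M_A\sim C_{\zeta_n}$, so $B$ is $n$-cyclotomic.

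\emph{The forward direction.} Suppose $B\sim C_{\zeta_n}$. As noted above, $C$ is invertible. Put $A=C^n\in GL_k(\C)$; the first part then gives $B\sim M_A$.

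The main obstacle is arranging the explicit eigen-decomposition, in particular verifying the wrap-around block, where $A=C^n$ is needed. The existence of $n$-th roots in $GL_k(\C)$ is standard, and it is the only place where working over $\C$ and invertibility are used.
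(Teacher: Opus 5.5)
Your proof is correct and is essentially the paper's argument: your subspace $X_j$ is exactly the image of the $j$-th block column of the paper's matrix $P$ (substitute $w=(\zeta_n^jC)^{n-1}u$), so your identity $M_A\,x_j(w)=x_j(\zeta_n^jCw)$ is the block-column form of $M_AP=PC_{\zeta}$, and your spanning argument via $\diag(I,C,\ldots,C^{n-1})$ and the Vandermonde matrix is the paper's Kronecker-product argument. The converse via an $n$-th root $C$ of $A$ is also the same, and you additionally justify that $C$ is invertible, which the paper assumes without comment.
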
  

\begin{proof}
Let $B$ be $n$-cyclotomic and $\zeta=\zeta_n$. Then
\[
B \sim C_{\zeta}=\diag (C,\zeta C,\ldots ,\zeta^{n-1}C)
\]
for some $C\in GL_k(\C)$ with $m=kn$. Define a matrix $P\in M_m(\C)$ by
\[
P=  
\begin{pmatrix}
C^{n-1} & (\zeta C)^{n-1} & (\zeta^2C)^{n-1} & \cdots & (\zeta^{n-1}C)^{n-1} \cr
\vdots & \vdots & \vdots & \ddots & \vdots \cr
C^2 & (\zeta C)^2 & (\zeta^2C)^2 & \cdots & (\zeta^{n-1}C)^2 \cr 
C & \zeta C & \zeta^2C & \cdots & \zeta^{n-1}C \cr
I_k & I_k & I_k & \cdots & I_k
\end{pmatrix}  
\]  
Let $A=C^n$. A direct computation shows that
\[
M_AP=PC_{\zeta}.
\]
Indeed, both sides are equal to
\[
\begin{pmatrix}
C^n & C^n & \cdots & C^n & C^n \cr
C^{n-1} & (\zeta C)^{n-1} & \cdots & (\zeta^{n-2}C)^{n-1} & (\zeta^{n-1}C)^{n-1} \cr
\vdots & \vdots & \ddots & \vdots & \vdots \cr
C^2 & (\zeta C)^2 & \cdots & (\zeta^{n-2}C)^2  & (\zeta^{n-1}C)^2 \cr 
C & \zeta C & \cdots  & \zeta^{n-2}C & \zeta^{n-1}C 
\end{pmatrix}  
\]
It remains to show that $P$ is invertible. Then $B\sim C_{\zeta}\sim M_A$ as claimed. Note that $P$ is invertible
if and only if $\diag(I_k,C,C^2,\ldots , C^{n-1}) P $ is invertible. But this is true, because
it is the Kronecker product of two invertible matrices, namely the matrix $\diag(I_k,C,C^2,\ldots , C^{n-1}) P $ equals
\[
\begin{pmatrix}
C^{n-1} & \zeta^{n-1}C^{n-1} & \cdots & (\zeta^{n-1})^{n-1}C^{n-1} \cr
C^{n-1} & \zeta^{n-2}C^{n-1} & \cdots & (\zeta^{n-1})^{n-2}C^{n-1} \cr
\vdots & \vdots & \ddots & \vdots \cr
C^{n-1} & \zeta C^{n-1} & \cdots & \zeta^{n-1}C^{n-1} \cr 
C^{n-1} & C^{n-1} & \cdots & C^{n-1}\cr
\end{pmatrix}  
=
\begin{pmatrix}
1 & \zeta^{n-1} & \cdots & (\zeta^{n-1})^{n-1} \cr
1 & \zeta^{n-2} & \cdots & (\zeta^{n-1})^{n-2} \cr
\vdots & \vdots & \ddots & \vdots \cr
1 & \zeta & \cdots & \zeta^{n-1} \cr
1 & 1 & \cdots & 1 
\end{pmatrix}   
\otimes C^{n-1}
\]  
Here the first matrix on the right side is a Vandermonde matrix. \\[0.2cm]
Conversely assume that $B\sim M_A$ for some $A\in GL_k(\C)$. Then there exists a matrix $C\in GL_k(\C)$
such that $A=C^n$. Then we obtain $B\sim M_A\sim C_{\zeta}$ as before in the first part,
so that $B$ is $n$-cyclotomic.
\end{proof}  

\begin{thm}\label{5.4}
Let $\Lg=\La\rtimes \langle v\rangle$ be a complex almost abelian Lie algebra with $Z(\Lg)=0$. Then
$\Lg$ admits an f.p.f.\ automorphism if and only if $\ad(v)_{\mid \La}$ is $n$-cyclotomic for some
$n\ge 2$. In this case $\Lg$ admits an f.p.f.\ automorphism of order $kn$ for all $k\ge 2$.
\end{thm}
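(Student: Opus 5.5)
Throughout, write $D=\ad(v)_{\mid \La}$. The plan is to reduce everything to a matrix equation for $D$, and then to read off both directions of Theorem~\ref{5.4} and the statement about orders from that equation.

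\textbf{Step 1: automorphisms preserve $\La$.} First I would show that $Z(\Lg)=0$ forces $D$ to be invertible. Any $x\in\La$ with $Dx=0$ commutes with $v$ and with $\La$, so it is central. Hence $D$ is invertible, so $[\Lg,\Lg]=D(\La)=\La$. Being the derived algebra, $\La$ is characteristic, so every $\phi\in\Aut(\Lg)$ preserves it.

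\textbf{Step 2: the matrix equation.} Write $\phi(v)=\la v+w$ with $w\in\La$ and $P=\phi_{\mid\La}$; here $\la\neq 0$ and $P$ is invertible. Applying $\phi$ to $[v,x]=Dx$ for $x\in\La$, and using that $\La$ is abelian, gives
\[
PD=\la DP,\qquad\text{i.e.}\qquad PDP^{-1}=\la D .
\]
In a basis adapted to $\La\subset\Lg$ the matrix of $\phi$ is block triangular, so its eigenvalues are $\la$ together with those of $P$. Hence $\phi$ is f.p.f.\ if and only if $\la\neq 1$ and $1$ is not an eigenvalue of $P$.

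\textbf{Step 3: necessity.} Let $\phi$ be f.p.f. By Step 2 the finite spectrum of $D$, which consists of nonzero numbers, is invariant under multiplication by $\la$. Therefore $\la$ is a root of unity, and since $\la\neq1$ it is a primitive $n$-th root of unity for some $n\ge2$. The $\la$-orbits on the spectrum then all have exactly $n$ elements. For each eigenvalue $\mu$ let $V_\mu$ be the generalized eigenspace of $D$. From $P^jDP^{-j}=\la^jD$ it follows that $P^j$ maps $V_\mu$ onto $V_{\la^j\mu}$ and conjugates $D_{\mid V_\mu}$ to $\la^jD_{\mid V_{\la^j\mu}}$. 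Choose one eigenvalue in each orbit, say forming a set $S$, and let $B$ be the restriction of $D$ to $\bigoplus_{\mu\in S}V_\mu$. Then $D$ is similar to $\diag(B,\la B,\ldots,\la^{n-1}B)$. Since $\la$ and $\zeta_n$ are both primitive $n$-th roots of unity, the blocks $\la^jB$ are just a permutation of the blocks $\zeta_n^jB$, so $D$ is $n$-cyclotomic. I expect this step to be the main technical point: one must check carefully the direction of the conjugation and that the orbits have full length $n$. The latter follows from $\mu\neq0$ and the primitivity of $\la$.

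\textbf{Step 4: sufficiency and orders.} Assume $D=\diag(B,\zeta B,\ldots,\zeta^{n-1}B)$ with $\zeta=\zeta_n$ and $B\in M_k(\C)$. Let $S$ be the block cyclic shift that sends the $i$-th block to the $(i+1)$-st, and put $P=cS$ with a scalar $c\neq0$. A direct check gives $PDP^{-1}=\la D$ with $\la=\zeta^{\pm1}$; the sign only depends on the direction of the shift. Both choices are primitive $n$-th roots, hence different from $1$. Define $\phi(v)=\la v$ and $\phi_{\mid\La}=P$; by Step 2 this is an automorphism. Since $S^n=\id$ and $S$ has eigenvalues the $n$-th roots of unity, the eigenvalues of $P$ are $c\zeta^j$. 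So $\phi$ is f.p.f.\ as soon as $c^n\neq1$. For the orders, take $c=\zeta_{kn}$ with $k\ge2$. Then $c^n=\zeta_k\neq1$, so $\phi$ is f.p.f. Moreover $P^N=c^NS^N$ equals the identity exactly when $n\mid N$ and $c^N=1$, i.e.\ when $kn\mid N$; as $\la^{kn}=1$ as well, $\phi$ has order exactly $kn$.
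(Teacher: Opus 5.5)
Your proof is correct, but it follows a different route from the paper's, and Step~3 has a direction slip that you need to fix.

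\textbf{Comparison with the paper.} For necessity, the paper replaces $\phi$ by its semisimple part and decomposes $\La$ into eigenspaces of $\phi$. It shows that $\ad(v)$ carries $V_\mu$ onto $V_{\al\mu}$ cyclically, which gives the block-companion form $M_A$ for $\ad(v)_{\mid\La}$. It then needs Lemma~\ref{5.3} to reach the block-diagonal form. That lemma requires an $n$-th root $C$ of $A$ and a Vandermonde--Kronecker change of basis.

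You swap the roles of the two maps. You decompose $\La$ into generalized eigenspaces of $D$ and let $P=\phi_{\mid\La}$ permute them via $PDP^{-1}=\la D$. This gives the block-diagonal form directly. You need neither semisimplicity of $\phi$, nor matrix $n$-th roots, nor Lemma~\ref{5.3}.

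The converse is dual in the same way. The paper takes $\phi$ diagonal relative to the $M_A$ form, acting as the scalar $\zeta^{i-1}\la$ on $U_i$. You take $\phi_{\mid\La}$ to be a scaled block cyclic shift relative to the block-diagonal form. The discrete-Fourier change of basis in Lemma~\ref{5.3} exchanges these two constructions.

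Your version is shorter and self-contained. You also justify $\phi(\La)=\La$ via $\La=[\Lg,\Lg]$, which the paper uses without comment. The paper's route has the extra payoff of the $M_A$ normal form as an equivalent characterization of cyclotomicity.

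\textbf{The slip in Step~3.} From $PD=\la DP$ and $Dx=\mu x$ you get $D(Px)=\la^{-1}\mu\,Px$. So $P^j$ maps $V_\mu$ onto $V_{\la^{-j}\mu}$, not onto $V_{\la^{j}\mu}$. Your similarity $D_{\mid V_\mu}\sim\la^jD_{\mid V_{\la^j\mu}}$ is false as written. The left side has the single eigenvalue $\mu$ and the right side has $\la^{2j}\mu$.

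Use $P^{-j}$ instead. It maps $V_\mu$ isomorphically onto $V_{\la^j\mu}$: you have $P^{-j}(V_\mu)\subseteq V_{\la^j\mu}$ and $P^{j}(V_{\la^j\mu})\subseteq V_\mu$. It also gives $D_{\mid V_{\la^j\mu}}\sim\la^jD_{\mid V_\mu}$. That is exactly what your stated conclusion $D\sim\diag(B,\la B,\ldots,\la^{n-1}B)$ requires, so the argument goes through once corrected.

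\textbf{A minor point in Step~4.} There you write ``by Step~2 this is an automorphism,'' but Step~2 only derived the forward implication. The converse follows from the same one-line computation $\phi([v,x])=PDx=\la DPx=[\phi(v),\phi(x)]$, since brackets inside $\La$ vanish.
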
  

\begin{proof}
Since we have $Z(\Lg)=0$, the linear map $\ad(v)_{\mid \La}: \La\ra \La$ has no eigenvalue zero. Hence it is a
vector space isomorphism. Assume first that $\Lg$ admits an f.p.f.\ automorphism $\phi$. We may assume that $\phi$
is semisimple. Because of $\phi(\La)\subseteq \La$ we have $\phi(v)=\al v+a_v$ with $a_v\in \La$.
It follows that $\al$ is an eigenvalue of $\phi$, so that $\al\neq 0,1$. Let
\[
\La=V_{\la_1}\oplus V_{\la_2}\oplus \cdots \oplus V_{\la_n}
\]
be the decomposition of $\La$ into a direct sum of eigenspaces $V_{\la_i}$ for $\phi$, with distinct eigenvalues
$\la_i$ of $\phi$. Fix an eigenvalue $\la_i$. Then we have $[v,V_{\la_i}]\subseteq V_{\al\la_i}$. Indeed, with
$w\in V_{\la_i}$ we have
\begin{align*}
\phi([v,w]) & =[\phi(v),\phi(w)] \\
            & = [\al v +a_v, \la_i w] \\
            & = \al\la_i[v,w],
\end{align*}
because $[a_v,w]=0$. So we have $[v,w]\in V_{\al\la_i}$ as required. As $\ad(v)_{\mid \La}$ is a vector space
isomorphism of $\La$ we have that
\[
\dim ([v,V_{\la_i}])=\dim (V_{\la_i})\le \dim (V_{\al\la_i})
\]
Hence we obtain a series of subspaces
\[
V_{\la_i}, V_{\al\la_i}, V_{\al^2\la_i},\ldots , V_{\al^k\la_i}, \ldots
\]
of $\La$ of non-decreasing dimension. Since $\La$ is the direct sum of all eigenspaces, these
subspaces cannot be all different. So there exist $p>q$ such that $\al^q\la_i=\al^p\la_i$, or
$\al^{p-q}=1$. It follows that $\al$ is an $n$-th primitive root of unity, where $n\ge 2$ because of $\al\neq 1$.
We have
\[
\dim (V_{\la_1})\le \dim (V_{\al\la_1})\le \cdots \le \dim (V_{\al^n\la_1})=\dim (V_{\la_1}),
\]  
so that we have equality everywhere, and
\[
[v,V_{\al^k\la_1}]=V_{\al^{k+1}\la_1}
\]
for all $k\ge 0$. Let $V_1=V_{\la_1}\oplus V_{\al\la_1}\oplus \cdots \oplus V_{\al^{n-1}\la_1}$. If $V_1\neq \La$,
then there exists an eigenvalue $\la_2$ of $\phi$ such that $\la_2\neq \al^k\la_1$ for all $k$. Then
$V_2=V_{\la_2}\oplus V_{\al\la_2}\oplus \cdots \oplus V_{\al^{n-1}\la_2}$ is another subspace of $\La$ with
$V_1\cap V_2=0$, $V_1\oplus V_2\subseteq \La$ and $[v,V_{\al^k\la_2}]\subseteq V_{\al^{k+1}\la_2}$ for all $k$.
Continuing this way, we find finitely many eigenvalues $\la_1,\ldots , \la_m$, each giving rise to a sequence of
subspaces $V_{\la_i}, V_{\al\la_i},\ldots ,V_{\al^{n-1}\la_i}$ with $[v,V_{\al^k\la_i}]\subseteq V_{\al^{k+1}\la_i}$ for all $k$.
Now let
\begin{align*}
U_1 & = V_{\la_1}\oplus V_{\la_2}\oplus \cdots \oplus V_{\la_m} \\
U_2 & = V_{\al\la_1}\oplus V_{\al\la_2}\oplus \cdots \oplus V_{\al\la_m} \\
\vdots \hspace*{0.18cm} & = \hspace*{0.2cm} \vdots \\
U_n & = V_{\al^{n-1}\la_1}\oplus V_{\al^{n-1}\la_2}\oplus \cdots \oplus V_{\al^{n-1}\la_m} \\
\end{align*}
Note that all spaces $U_i$ have equal dimension, say $\dim (U_i)=k$. Then we have
\begin{align*}
U_1\oplus U_2\oplus \cdots \oplus U_n & = \La, \\
[v,U_i] & = U_{i+1},\; \text{for all} \; i<n,\\
[v,U_n] & = U_1.
\end{align*}  
Choose a basis $\{u_{11},\ldots ,u_{1k}\}$ of $U_1$, and let $u_{2i}=[v,u_{1i}]$ for $i=1,\ldots ,k$.
Then $\{u_{21},\ldots ,u_{2k}\}$ is a basis of $U_2$. Continuing this way, with $u_{j+1,i}=[v,u_{ji}]$ for $j<n$
we obtain a basis of each subspace $U_j$ for $1 \le j\le n$. With respect to the basis
\[
\{u_{11},\ldots ,u_{1k},u_{21},\ldots ,u_{2k}, u_{31},\ldots ,u_{nk}\}
\]  
of $\La$ the matrix of $\ad(v)_{\mid \La}$ is given by
\[
M_A=  
\begin{pmatrix}
0 & 0 & 0 & \cdots & 0 & A \cr
I_k & 0 & 0 & \cdots & 0 & 0 \cr
0 & I_k & 0 & \cdots & 0 & 0 \cr
\vdots & \vdots & \vdots & \ddots & \vdots & \vdots \cr
0 & 0 & 0 & \cdots & I_k & 0 
\end{pmatrix}  
\]
for some $A\in GL_k(\C)$. By Lemma $\ref{5.3}$ it follows that $\ad(v)_{\mid \La}$ is $n$-cyclotomic. \\[0.2cm]
Conversely, assume that  $\ad(v)_{\mid \La}$ is $n$-cyclotomic. Then there is a basis $\{u_{11},u_{12}, \ldots ,u_{nk}\}$
of $\La$ such that the matrix of $\ad(v)_{\mid \La}$ with respect to this basis is given by $M_A$ as above. Now
choose a $\la\in \C^{\times}$ and a primitive $n$-th root of unity $\zeta$ such that
\[
\zeta^{\ell}\la\neq 1 \text{ for all } 0\le \ell \le n-1.
\]
According to the form of $M_A$, we can decompose $\La$ into subspaces $U_i$ such that
\begin{align*}
U_1\oplus U_2\oplus \cdots \oplus U_m & = \La, \\
[v,U_i] & = U_{i+1},\; \text{for all} \; i<n,\\
[v,U_n] & = U_1.
\end{align*}
We define a map $\phi\colon \Lg\ra \Lg$ for $\Lg=\La\rtimes \langle v\rangle$ by $\phi(v)=\zeta v$, and
$\phi_{\mid U_i}$ as the multiplication with $\zeta^{i-1}\la$. So with respect to the above basis, $\phi_{\mid \La}$
is represented by the block matrix
\[
{\rm diag}(\la I_k,\zeta \la I_k,\ldots ,\zeta^{n-1}\la I_k).
\]
Since $\zeta^{\ell}\la\neq 1$, it follows that $\phi$ is fixed-point-free. We claim that
$\phi\in \Aut(\Lg)$. For this it is enough to show that for
each $1\le i\le n$ and for each $u\in U_i$
we have
\[
\phi([v,u])=[\phi(v),\phi(u)].
\]
Since $[v,u]\in U_{i+1}$, with $U_{n+1}=U_1$, we have $\phi([v,u])=\zeta^i\la [v,u]$, and
\[
[\phi(v),\phi(u)]=[\zeta v,\zeta^{i-1}\la u]=\zeta^i\la [v,u],
\]
which was to be shown. \\[0.2cm]
Finally, if $\ad(v)_{\mid \La}$ is $n$-cyclotomic for some $n\ge 2$, we can choose 
$\la=\zeta_{kn}$ in the definition of $\phi$ for every $k\ge 2$. Then
$\phi$ is an f.p.f.\ automorphism of order $kn$. 
\end{proof}  

As an example, we consider the almost abelian unimodular Lie algebra $\Lg=\Lg_9(\al,-1-\al)$ 
for any $\al\neq 0, -1$, with Lie brackets
\[
[e_1,e_2]=e_2,\, [e_1,e_3]=e_2+\al e_3,\, [e_1,e_4]=e_3-(1+\al) e_4.
\]
Then we have $\Lg=\La\rtimes \langle e_1\rangle$, with $\La=\langle e_2,e_3,e_4\rangle$ and $Z(\Lg)=0$.
Furthermore,
\[
A=\ad (e_1)_{\mid \La}=\begin{pmatrix} 1 & 1 & 0 \cr 0 & \al & 1 \cr 0 & 0 & -1-\al \end{pmatrix}.
\]  

\begin{ex}
The almost abelian Lie algebra $\Lg=\Lg_9(\al,-1-\al)$, for $\al\neq 0,-1$, admits an f.p.f.\ automorphism if and only if
$1+\al+\al^2=0$, i.e., if $\al$ is a primitive $3$-rd root of unity.  
\end{ex}  

Indeed, by Theorem $\ref{5.4}$, $\Lg$ admits an f.p.f.\ automorphism if and only if $A$ is $3$-cyclotomic, i.e.,
if and only $A$ is similar to $\diag(c,\om c,\om^2 c)$, for some $c\neq 0$ and $\om=\zeta_3$. The comparison of
the characteristic polynomials, 
\[
t^3-t(\al^2+\al+1)+\al(\al+1)= t^3-c^3,
\]
yields $\al^2+\al+1=0$. Then the matrices are also similar. This coincides with the result in Proposition $\ref{4.3}$.
Note that the algebras $\Lg_7(\al)$ are also almost-abelian, and that we also can recover the result in Proposition $\ref{4.3}$
from Theorem $\ref{5.4}$. \\[0.2cm]
For the general case of an almost-abelian Lie algebra, we will need the following technical lemma.

\begin{lem}\label{extend-to-Jordan-block}
Let $\Lg= \La \rtimes \langle v \rangle$ be an almost abelian Lie algebra. Suppose that $\La = V \oplus W$, where $V$ and $W$
are ideals of $\Lg$, i.e.\,  $V$ and $W$ are invariant under the action of $\ad(v)$, and that there exists a basis 
$e_1, e_2, \ldots , e_n$ of $V$ such that $\ad(v)_{|V}$ is given by means of a Jordan block with zero's on the diagonal:
\[
\ad(v)_{|V} = \begin{pmatrix}
0 & 0 & 0 & \cdots & 0 & 0  \\
1 & 0 & 0 & \cdots & 0 & 0 \\
0 & 1 & 0 & \cdots & 0 & 0\\
\vdots &  & \ddots & \ddots & \vdots & \vdots \\
0 & 0 &   & \ddots & 0 & 0 \\
0 & 0 & 0  & \cdots & 1 & 0 \\
\end{pmatrix}
\]
Suppose that $\varphi'$ is an f.p.f.\ automorphism of the subalgebra $\Lg' = W \rtimes \langle v \rangle$ with
$\varphi'(v)= \zeta v$ for some $\zeta \neq 1$ and $\varphi'(W) = W$. Then $\varphi'$ can be extended to an
f.p.f.\ automorphism $\phi$ of $\Lg$ with $\varphi(V)=V$.
\end{lem}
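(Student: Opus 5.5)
We will define $\phi$ on $\Lg = V\oplus W\oplus\langle v\rangle$ by setting $\phi_{\mid W}=\varphi'_{\mid W}$ and $\phi(v)=\varphi'(v)=\zeta v$, and by prescribing $\phi$ on $V$ as a diagonal map in the given basis, $\phi(e_i)=\mu_i e_i$. Since $\La$ is abelian, the only brackets to check are those of the form $[v,x]$. For $x\in W$ the relation $\phi([v,x])=[\phi(v),\phi(x)]$ already holds because $\varphi'$ is an automorphism of $\Lg'$ and $[v,W]\subseteq W$.

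It remains to check the brackets with $x\in V$. Here $[v,e_i]=e_{i+1}$ for $i<n$ and $[v,e_n]=0$. We need
\[
\mu_{i+1}e_{i+1}=\phi([v,e_i])=[\zeta v,\mu_i e_i]=\zeta\mu_i e_{i+1},
\]
that is, $\mu_{i+1}=\zeta\mu_i$, while the condition coming from $e_n$ is vacuous. So we take $\mu_i=\zeta^{i-1}\mu$ for some $\mu\in\C^{\times}$ still to be chosen. Then $\phi$ is a bijective linear map preserving all brackets, hence $\phi\in\Aut(\Lg)$. It restricts to $\varphi'$ on $\Lg'$ and satisfies $\phi(V)=V$.

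For the fixed-point-free property, note that in the decomposition $\Lg=V\oplus\Lg'$ the map $\phi$ is block diagonal: $V$ and $\Lg'=W\oplus\langle v\rangle$ are both $\phi$-invariant. Hence $\det(\phi-\id)=\det(\phi_{\mid V}-\id)\det(\varphi'-\id)$. The second factor is nonzero by hypothesis. The first factor is $\prod_{i=1}^n(\zeta^{i-1}\mu-1)$, so we only need $\mu\neq\zeta^{-(i-1)}$ for $1\le i\le n$. This excludes finitely many values of $\mu$, and since $\C^{\times}$ is infinite such a $\mu$ exists.

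I do not expect a real obstacle. The only point requiring care is that $\Lg$ is a direct sum of vector spaces $V\oplus W\oplus\langle v\rangle$ with no cross brackets between $V$ and $W$. This holds because both lie in the abelian ideal $\La$, so defining $\phi$ piecewise is legitimate.

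If one also wants $\phi$ of finite order whenever $\varphi'$ has finite order, choose $\mu$ to be a root of unity of suitable order that avoids the excluded values. This refinement is not needed for the lemma as stated.
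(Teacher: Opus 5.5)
Your proof is correct and follows the paper's argument: you extend $\varphi'$ by $\phi(e_i)=\zeta^{i-1}\mu e_i$, with $\mu\neq 0$ chosen so that $\zeta^{k}\mu\neq 1$ for $0\le k\le n-1$, and then check the brackets $[v,e_i]$ and the vanishing brackets inside $\La$. Your block-diagonal determinant computation simply spells out the fixed-point-freeness, which the paper treats as obvious.
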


\begin{proof} The fact that $\ad(v)_{|V}$ is a Jordan block as above means that 
\[
[v,e_i] = e_{i+1} \;\; (1\leq i \leq n-1) \mbox{ and } [v,e_n]=0.
\]
Choose $0\neq \mu\in \C$ in such a way that $\zeta^k\mu\neq 1$ for all $0 \leq k \leq n-1$. Extend $\varphi'$ to a linear map
$\varphi$ on $\Lg$ by defining $\varphi(e_i) = \zeta^{i-1} \mu e_i$. It is obvious that $\varphi$ is an f.p.f.\ invertible
linear map of $\Lg$ with $\varphi(V) =V$. We also have that, for $1\leq i \leq n-1$,
\[
\varphi ([v, e_i]) = \varphi (e_{i+1}) = \zeta^i \mu e_{i+1} = [ \zeta v , \zeta^{i-1} \mu e_i] = [ \varphi(v), \varphi( e_i)]
\]
and 
\[
\varphi([v,e_n]) = \varphi(0) = 0 = [\zeta v, \zeta^{n-1} \mu e _n] = [ \varphi(v) , \varphi(e_n)].
\]
Moreover, for all $w\in W$ and $1\leq i \leq n$ we also have that $\varphi[e_i, w] = 0 = [\varphi(e_i) , \varphi(w) ] $,
showing that $\varphi$ is an automorphism of $\Lg$. 
\end{proof}
Let $\Lg$ be a finite dimensional Lie algebra. The terms $Z_i(\Lg)$ of the upper central series of $\Lg$ are
determined  by  $Z_1(\Lg) = Z(\Lg)$ and $Z_{i+1}(\Lg)/Z_i(\Lg) = Z( \Lg/ Z_i(\Lg)) $ for all $i\geq 1$.
We let $Z_\infty(\Lg) = \bigcup_i Z_i(\Lg)$. Note that $Z_\infty(\Lg) = Z_i(\Lg)$ for some $i$ big enough, since we assume
that $\Lg$ is finite dimensional and so  $Z(\Lg/Z_\infty(\Lg)) =0$. \\[0.2cm]
We are now ready to treat the general case of
almost abelian Lie algebras by reducing it to centerless almost abelian Lie algebras.

\begin{thm} Let $\Lg=\La \rtimes \langle v \rangle$ be an almost abelian Lie algebra. Then $\Lg$ admits an f.p.f.\ automorphism
if and only if $\Lg/Z_\infty(\Lg)$ admits an f.p.f.\ automorphism. 
\end{thm}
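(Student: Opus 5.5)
The plan is to identify $Z_\infty(\Lg)$ explicitly in terms of $D=\ad(v)_{\mid \La}$. The direction ``$\Rightarrow$'' is then a general fact about characteristic ideals, and the direction ``$\Leftarrow$'' follows by repeated use of Lemma~\ref{extend-to-Jordan-block}.

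\emph{Structure of $Z_\infty(\Lg)$.} Write $\La=\La_0\oplus \La_1$, where $\La_0$ is the generalized eigenspace of $D$ for the eigenvalue $0$ and $\La_1$ is the sum of the generalized eigenspaces for the nonzero eigenvalues. Both summands are $D$-invariant, and since $\La$ is abelian they are ideals of $\Lg$.

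First suppose $\La_1\neq 0$. I claim $Z_\infty(\Lg)=\La_0$. Indeed, $Z(\Lg)=\ker D$, because $v$ is not central and no element with nonzero $v$-component can be central. Inductively, $Z_i(\Lg)=\ker D^i$, since $\Lg/\ker D^{i}$ is again almost abelian with induced operator having kernel $\ker D^{i+1}/\ker D^i$. This stabilizes at $\La_0$. Then $\Lg/Z_\infty(\Lg)\cong \La_1\rtimes\langle v\rangle=:\Lg'$, which is a subalgebra of $\Lg$ with $Z(\Lg')=0$.

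If instead $\La_1=0$, then $\Lg$ is nilpotent, so $Z_\infty(\Lg)=\Lg$ and the quotient is $0$. The zero algebra trivially admits an f.p.f.\ automorphism, so in that case I must show directly that $\Lg$ does; this is covered by the construction below with $\Lg'=\langle v\rangle$.

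\emph{Direction ``$\Rightarrow$''.} Each $Z_i(\Lg)$ is characteristic, hence $\phi(Z_\infty(\Lg))=Z_\infty(\Lg)$ for any automorphism $\phi$. So $\phi$ induces an automorphism $\bar\phi$ of $\Lg/Z_\infty(\Lg)$. If $\phi$ is f.p.f., then $\phi-\id$ is injective on $Z_\infty(\Lg)$, hence bijective there. A dimension count then shows that the induced map $\bar\phi-\id$ on the quotient is bijective, so $\bar\phi$ is f.p.f.

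\emph{Direction ``$\Leftarrow$''.} Let $\psi$ be an f.p.f.\ automorphism of $\Lg'\cong\Lg/Z_\infty(\Lg)$, which we may take semisimple. The step I expect to be the main obstacle is normalizing $\psi$ so that Lemma~\ref{extend-to-Jordan-block} applies, that is, $\psi(W)=W$ for $W=\La_1$ and $\psi(v)=\zeta v$ with $\zeta\neq 1$.

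For the first condition: since $D$ is invertible on $\La_1$, we have $\La_1=[\Lg',\Lg']$, which is characteristic, so $\psi(\La_1)=\La_1$. For the second: because $\psi$ is semisimple and preserves $\La_1$, there is an eigenvector $v'=v+a$ with $a\in \La_1$ and $\psi(v')=\zeta v'$. Here $\zeta\neq 1$ because $\psi$ is f.p.f. Since $\La$ is abelian, $\ad(v')_{\mid\La}=\ad(v)_{\mid\La}$. So we may replace $v$ by $v'$ throughout without changing $D$ or the decomposition $\La=\La_0\oplus\La_1$.

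In the nilpotent case we instead take $\Lg'=\langle v\rangle$ and $\psi(v)=\zeta v$ for any $\zeta\neq 0,1$.

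Next, decompose $\La_0$ into Jordan blocks of the nilpotent operator $D_{\mid \La_0}$, say $\La_0=V_1\oplus\cdots\oplus V_r$. Each $V_j$ has a basis in which $\ad(v)_{\mid V_j}$ is the lower Jordan block with zero diagonal, as in Lemma~\ref{extend-to-Jordan-block}. Now apply that lemma $r$ times. At step $j$ take $V=V_j$, $W=\La_1\oplus V_1\oplus\cdots\oplus V_{j-1}$, and the ambient algebra $(W\oplus V_j)\rtimes\langle v\rangle$. All the $V_i$ and $\La_1$ are ideals, and the automorphism from the previous step fixes $W$ and sends $v\mapsto\zeta v$. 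The lemma then extends it to an f.p.f.\ automorphism that preserves $V_j$, so the new $W$ is again preserved and $v\mapsto \zeta v$ still holds. After $r$ steps we obtain an f.p.f.\ automorphism of all of $\Lg$.
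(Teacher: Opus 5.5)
Your argument is correct and follows the paper's proof. Like the paper, you identify $Z_\infty(\Lg)$ with the generalized $0$-eigenspace $\La_0$ of $\ad(v)_{\mid\La}$, realize the quotient as the subalgebra $\La_1\rtimes\langle v\rangle$, and extend one nilpotent Jordan block at a time via Lemma~\ref{extend-to-Jordan-block}, treating the nilpotent case the same way. The only difference is how you obtain an automorphism with $\psi(\La_1)=\La_1$ and $\psi(v)=\zeta v$: the paper takes the explicit one built in the proof of Theorem~\ref{5.4}, whereas you normalize an arbitrary f.p.f.\ automorphism (its semisimple part, the characteristic ideal $[\Lg',\Lg']=\La_1$, and an eigenvector $v+a$), which makes this step independent of the cyclotomic characterization.
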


For the theorem above, when $\Lg/Z_\infty(\Lg)=0$, we say that  $\Lg/Z_\infty(\Lg)=0$ admits an f.p.f.\ automorphism (indeed, the
only fixed point of the only automorphism is $0$). 

\begin{proof}
Assume that $\Lg$ admits an f.p.f.\ automorphism $\varphi$. As $\varphi(Z_\infty(\Lg)) = Z_\infty(\Lg)$, it follows that
$\varphi$ induces an automorphism $\bar\varphi$ on $\Lg/Z_\infty(\Lg)$ which is clearly also f.p.f.\, i.e., it does not have eigenvalue
equal to $1$. \\[0.2cm]
Conversely, assume now that $\Lg/Z_\infty(\Lg)$ admits an f.p.f.\ automorphism. We choose a basis of $\La$ with respect
to which $\ad(v)_{|\La}$ is represented by a matrix of the form 
\[ \begin{pmatrix} 
J_{n_1} & 0 & \cdots & 0 & 0 \\
0 & J_{n_2} & \cdots & 0 & 0 \\
\vdots & & \ddots & & \vdots \\
0 & 0 & \cdots & J_{n_k} & 0 \\
0 & 0 & \cdots & 0 & B
\end{pmatrix}\]
where each $J_{n_i}$ is a Jordan block of size $n_i$ with zero's on the diagonal and $B$ is a matrix which does
not contain $0$ as an eigenvalue. It is possible that there is no $B$ and we only have Jordan blocks with zero's on
the diagonal. This is the case when $\Lg$ is nilpotent, and so $Z_\infty(\Lg)=\Lg$. \\[0.2cm]
Suppose first that $\Lg$ is not nilpotent (so there is a matrix $B$). Then $Z_\infty(\Lg)$ is exactly the subspace of $\La$
corresponding to the first $n_1 + n_2 + \cdots + n_k$ basis vectors (so the generalised eigenspace of $\ad(v)_{|\La}$ corresponding
to eigenvalue $0$). If we let $W$ denote the space generated by the other generalised eigenvectors, then $\ad(v)_{| W} $ is given
by the matrix $B$. It follows that $\Lg/Z_\infty(\Lg)$ is then isomorphic to the subalgebra
$W \rtimes \langle v \rangle $ of $\Lg$. 
As we assume that $\Lg/Z_\infty(\Lg)$ admits an f.p.f.\ automorphism, by Theorem $\ref{5.4}$ we have that $B$ is $n$-cyclotomic.
From the proof of that theorem we know that there exists an f.p.f.\ automorphism $\varphi'$ of
$W \rtimes \langle v \rangle$ with $\varphi'(W)=W$ and $\varphi(v)=\zeta v$, where $\zeta$ is a primitive $n$-th root if unity. 
By now iteratively applying Lemma~\ref{extend-to-Jordan-block} for each Jordan block $J_{n_i}$, we can extend $\varphi'$ to
an f.p.f.\ automorphism of $\Lg$. \\[0.2cm]
When $\Lg$ is nilpotent (this is the case where $\Lg/Z_\infty(\Lg)=0$), we can start by taking $\varphi'(z) =\zeta z$ for
any choice of $\zeta \in \C\setminus\{0,1\}$ and then also extend $\varphi'$ iteratively to the whole of $\Lg$
(where in the first step we take $W=0$).
\end{proof}

\section{Automorphisms of filiform Lie algebras}

Let $\Lg$ be a complex filiform Lie algebra of dimension $n\ge 3$. It has an adapted basis, which is a basis 
$\{e_1,e_2, \ldots ,e_n\}$ such that
\[
\begin{array}{l}
{[e_1,e_i]}= e_{i+1} \mbox{ for } i= 2,3,\ldots , n-1\\[1mm]
{[e_1,e_n]}= 0 \\[1mm]
{[e_2,e_3]} \in {\rm span}\{ e_5,e_6, \ldots , e_n \}.
\end{array}
\]
For such a basis, it holds that  there is an $\alpha\in \C$ such that
\[
\begin{array}{l}
{[e_i,e_j]}\in \langle e_{i+j}, e_{i+j+1}, \ldots, e_n\rangle \mbox{ if  } i+j \leq n\\[1mm]
{[e_i,e_{n+1-i}]} = (-1)^i \alpha e_n \mbox{ for }i =2,3,\ldots n-1\mbox{ with }\alpha =0 \mbox{ if $n$ is odd.}  
\end{array} 
\]
If $n$ is even, we may assume that $\al=0$ or $\al=1$. For details see section $6$ in \cite{BU82}. Furthermore
the subspaces
\[
\Lg_i= {\rm span} \{ e_i, e_{i+1}, \ldots, e_n\}
\]
are characteristic ideals of $\Lg$. Let us recall the definitions of the filiform Lie algebras $L_n$ and $Q_n$.
\begin{defi}
Let $L_n$ be the standard graded filiform Lie algebra algebra of dimension $n$ with adapted basis $\{e_1,\ldots ,e_n\}$, with
defining Lie brackets $[e_1,e_i]=e_{i+1}$ for $2\le i\le n-1$. 
For $n$ even, let $Q_n$ the graded filiform Lie algebra with adapted basis $\{e_1,\ldots ,e_n\}$, and nonzero Lie brackets defined by
\begin{align*}
[e_1,e_i] & = e_{i+1} \text{ for } 2\le i\le n-1, \\[0.1cm]
[e_i, e_{n+1-i}]  & =(-1)^i e_n \text{ for } 2\le i\le \frac{n}{2}.
\end{align*}
\end{defi}
To each filiform Lie algebra $\Lg$ one can associate a graded Lie algebra by
\[
{\rm gr}(\Lg)=\bigoplus_{i=1}^n \Lg^i/\Lg^{i+1},
\]
where
\[
[x+\Lg^{i+1},y+\Lg^{j+1}]=[x,y]+\Lg^{i+j+1}
\]  
for $x\in \Lg^i$, $y\in \Lg^j$. 
It follows that the associated graded algebra ${\rm gr}(\Lg)$ is either isomorphic to $L_n$ or $Q_n$.

\begin{lem}\label{6.2}
Let $\Lg$ be a filiform Lie algebra satisfying ${\rm gr}(\Lg)\cong Q_n$ and $[\Lg_2,\Lg_2]\subseteq \Lg_n$.
Then $\Lg$ is isomorphic to $Q_n$.  
\end{lem}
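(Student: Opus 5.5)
The plan is to show that the hypothesis $[\Lg_2,\Lg_2]\subseteq \Lg_n$ reduces the whole bracket structure to a finite list of scalars. I will then remove all of them except the one that defines $Q_n$ by an explicit change of adapted basis.

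\emph{Step 1: encode the brackets.} Fix an adapted basis $\{e_1,\ldots,e_n\}$ of $\Lg$. Since ${\rm gr}(\Lg)\cong Q_n$, $n$ is even. For $2\le i,j\le n$ the hypothesis gives $[e_i,e_j]=c_{ij}e_n$ for scalars $c_{ij}=-c_{ji}$. Because $e_n$ is central and $[e_1,e_n]=0$, the Jacobi identity for $e_1,e_i,e_j$ gives
\[
c_{i+1,j}+c_{i,j+1}=0 \qquad (2\le i,j\le n-1).
\]
So on each antidiagonal $i+j=s$ the coefficients are determined by $d_s:=c_{2,s-2}$, namely $c_{i,s-i}=(-1)^i d_s$. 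Antisymmetry then forces $d_s=0$ for $s$ even. Centrality of $e_n$ gives $c_{i,n}=0$, and propagating along the antidiagonal kills $d_s$ for all $s\ge n+2$. The constraint $[e_2,e_3]\in{\rm span}\{e_5,\ldots,e_n\}$ gives $d_5$ only, as a free parameter. Thus the only possibly nonzero data are $d_5,d_7,\ldots,d_{n-1},d_{n+1}$. The condition ${\rm gr}(\Lg)\cong Q_n$ (as opposed to $L_n$) says exactly that $d_{n+1}\neq 0$, because only the antidiagonal $s=n+1$ survives in the associated graded algebra. The Jacobi identities among $e_2,\ldots,e_n$ are automatic, since all their brackets are central. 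Hence every such tuple $(d_5,\ldots,d_{n+1})$ with $d_{n+1}\ne 0$ really occurs, and the task is to normalize it.

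\emph{Step 2: kill the lower $d_s$.} I will use substitutions of the form
\[
e_2'=e_2+t\,e_m \quad (m\ge 3), \qquad e_i'=\ad(e_1)^{i-2}e_2'=e_i+t\,e_{i+m-2}, \qquad e_1'=e_1.
\]
This keeps the basis adapted (the relations $[e_1',e_i']=e_{i+1}'$ hold by construction, and $e_n'=e_n$). Computing $[e_i',e_j']$ by bilinearity, the new antidiagonal constants satisfy
\[
d'_s=d_s+t\,\kappa\, d_{s+m-2}+t^2(\ldots)\,d_{s+2m-4},
\]
where $\kappa$ is a combinatorial sign sum coming from $c_{i,s-i+m-2}+c_{i+m-2,s-i}$. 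I will take $m=n+3-s$, so that $s+m-2=n+1$. Then the linear term is $t\kappa d_{n+1}$ and the quadratic term involves $d_{s+2m-4}$ with index $>n+1$, which vanishes. Also, $d'_{s'}$ is unchanged for every $s'>s$. Choosing $t=-d_s/(\kappa d_{n+1})$ kills $d_s$ without disturbing the larger ones.

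I therefore proceed downward: $s=n-1,n-3,\ldots,5$. Each stage uses $m=n+3-s$, and since $s$ is odd and $n$ is even, $m$ is even. At the end only $d_{n+1}\neq0$ remains.

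\emph{Step 3: rescale.} A final rescaling $e_1\mapsto a e_1$, $e_2\mapsto b e_2$ (and correspondingly $e_i\mapsto a^{i-2}be_i$) changes $d_{n+1}$ by the factor $b^2a^{n-4}/(a^{n-2}b)=b/a^2$. This lets me make $d_{n+1}=1$, which gives $[e_i,e_{n+1-i}]=(-1)^ie_n$, i.e.\ $\Lg\cong Q_n$.

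The main obstacle is Step 2: verifying that $\kappa\neq 0$. Using $c_{i,n+1-i}=(-1)^i d_{n+1}$, both terms of $\kappa$ should equal $(-1)^i d_{n+1}$ up to the same sign, so that they add rather than cancel. The parity of $m$ is exactly what controls this: since $m$ is even, $(-1)^{i+m-2}=(-1)^i$, and $\kappa$ should come out as $\pm2$, independent of $i$. The independence of $i$ is also needed for consistency, since it guarantees that a single $t$ works for all pairs $(i,j)$ on the antidiagonal. I would check this first on the pair $(i,j)=(2,s-2)$ and then confirm via the recursion that it holds along the whole antidiagonal.
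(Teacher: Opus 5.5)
Your proof is correct, and after the first step it takes a different route from the paper's.

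\textbf{Where the two arguments diverge.} Both arguments begin with the same Jacobi recursion $c_{i+1,j}=-c_{i,j+1}$ along antidiagonals. From there the paper does not normalize the structure constants. It solves a triangular linear system for $\be_4,\ldots,\be_{n-2}$ to produce a non-nilpotent derivation $D$ with $D(e_1)=e_2+\be_4e_4+\cdots+\be_{n-2}e_{n-2}$, $D(e_i)=e_i$ for $2\le i\le n-1$, and $D(e_n)=2e_n$. It then invokes the cited rank theory of filiform Lie algebras (rank at most $2$, and rank $2$ forces $L_n$ or $Q_n$) to conclude $\Lg\cong Q_n$. You instead eliminate $d_{n-1},d_{n-3},\ldots,d_5$ directly, using unitriangular changes of adapted basis $e_2\mapsto e_2+te_m$ with $m=n+3-s$ even.

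\textbf{The step you flagged does close.} Take $i+j=s$. Then $j+m-2=n+1-i$ and $i+m-2=n+1-j$ both lie in $[2,n-1]$. So
\[
c_{i,j+m-2}+c_{i+m-2,j}=(-1)^i\bigl(1+(-1)^m\bigr)d_{n+1}=2(-1)^i d_{n+1}.
\]
This is nonzero and has the same sign pattern as the antidiagonal, so a single $t$ works for every pair. The quadratic term lies on the antidiagonal $2n+2-s\ge n+3$ and vanishes. Antidiagonals $s'>s$ are untouched, so the downward induction goes through.

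\textbf{What each approach buys.} Your argument is elementary and self-contained, and it produces an explicit isomorphism. It avoids the external rank classification, which the paper uses only in summary form. The paper's argument is shorter once that classification is granted, and it matches the derivation and rank viewpoint used later in Section 6. Both ultimately exploit the same triangularity, which comes from $d_{n+1}\neq 0$.

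\textbf{A small slip in Step 3.} The rescaling factor is $a^{n-3}b^2/(a^{n-2}b)=b/a$, not $b/a^2$. This is harmless: taking $a=1$ and $b=1/d_{n+1}$ still normalizes $d_{n+1}$ to $1$.
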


\begin{proof}
By assumption $\Lg$ has an adapted basis $\{e_1,\ldots ,e_n\}$, such that
\begin{align*}
[e_1,e_i] & = e_{i+1},\; 2\le i\le n-1,\\
[e_i,e_j] & = c_{i,j}e_n,\; 2\le i,j \le n
\end{align*}
for some $c_{i,j}\in \C$ with $c_{i,n+1-i}=(-1)^i$. We have $c_{i+1,j-1}=-c_{i,j}$ for all $i\ge 2,j\ge 3$ because of
\begin{align*}
[e_{i+1},e_{j-1}] & = [[e_1,e_i],e_{j-1}] \\
                & = -[[e_i,e_{j-1}],e_1] -[[e_{j-1},e_1],e_i] \\
                & = - [e_i,e_j]
\end{align*}
Here we have used that $[e_i,e_{j-1}]\in [\Lg_2,\Lg_2]\subseteq \Lg_n$, and $[e_n,e_1]=0$. The above identity implies
$c_{i,j}=0$ for $i+j \equiv 0\bmod 2$ and $c_{i,j}=(-1)^ic_{2,j+i-2}$ for all $i\ge 2, j\ge 3$. \\[0.2cm]
Using these conditions it is straightforward to show that there are $\be_4,\be_6,\ldots, \be_{n-2}\in \C$ 
such that the linear map $D\colon \Lg\ra \Lg$ defined by
\begin{align*}
D(e_1) & = e_2+\be_4 e_4+\be_6e_6+\cdots +\be_{n-2}e_{n-2},\\
D(e_i) & = e_i, \; 2\le i\le n-1,\\
D(e_n) & = 2e_n 
\end{align*}
is a derivation of $\Lg$. Indeed, one can verify that we have
\[
D([e_i,e_j])=[D(e_i),e_j]+[e_i,D(e_j)]
\]
for all $1\le i<j\le n$, where the $\be_4,\be_6,\ldots, \be_{n-2}\in \C$ are determined by the following system of
equations:
\begin{align*}
0 & =c_{2,3} +\be_4c_{4,3}+\be_6 c_{6,3}+\cdots +\be_{n-6}c_{n-6,3}+ \be_{n-4}c_{n-4,3}+\be_{n-2},\\
0 & =c_{2,5} +\be_4c_{4,5}+\be_6 c_{6,5}+\cdots +\be_{n-6}c_{n-6,5}+\be_{n-4},\\
 &  \hspace*{0.2cm} \vdots \\
0 & =c_{2,n-5}+\be_4c_{4,n-5}+\be_6,\\
0 & = c_{2,n-3}+\be_4.  
\end{align*}
It follows that the rank of $\Lg$ must be equal to $2$. Indeed, the rank of a filiform Lie algebra can only be
$0$, $1$ or $2$,  and $D$ has one eigenvalue $1$, one eigenvalue $2$ and $n-2$ eigenvalues $0$.
Hence the rank cannot be zero or one. For details see \cite{GVE}, Proposition $1$.
This implies that $\Lg\cong Q_n$. 
\end{proof}  

Let us recall the following definition.

\begin{defi}
A Lie algebra $\Lg$ is called {\em characteristically nilpotent}, or a {\em CNLA}, if all derivations
$D\in \Der(\Lg)$ are nilpotent.
\end{defi}

In particular, a characteristically nilpotent Lie algebra $\Lg$ has only nilpotent inner derivations
$D=\ad(x)$ for all $x\in \Lg$, and hence is nilpotent by Engel's theorem.

\begin{prop}\label{6.4}
Let $\Lg$ be a complex filiform Lie algebra of dimension $n\ge 3$. Assume that $\Lg$ is a CLNA. Then $\Lg$ does
not admit an f.p.f.\ automorphism.
\end{prop}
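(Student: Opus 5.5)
The plan is to take an f.p.f.\ automorphism $\phi$ of $\Lg$, diagonalize it in an adapted basis, and use its eigenvalues to write down a derivation that is not nilpotent. That contradicts the assumption that $\Lg$ is a CNLA.

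First I replace $\phi$ by its semisimple part $\phi_s$. It is again an automorphism, because $\Aut(\Lg)$ is an algebraic group, and it is f.p.f.\ exactly when $\phi$ is. Since $\phi_s$ is semisimple and preserves the characteristic ideals $\Lg_i$, it has an eigenbasis adapted to the flag $\Lg\supset\Lg_2\supset\cdots\supset\Lg_n$. I choose an eigenvector $e_1\notin \Lg_2$ with eigenvalue $a$ and an eigenvector $e_2\in\Lg_2\setminus\Lg_3$ with eigenvalue $b$, and set $e_{i+1}=[e_1,e_i]$ for $2\le i\le n-1$.

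The point to check is that these elements again form an adapted basis, i.e.\ that $e_{i+1}\in\Lg_{i+1}\setminus\Lg_{i+2}$ and $[e_1,e_n]=0$. Write $e_1=\la x_1+y$ with $\la\neq 0$, $y\in\Lg_2$, where $x_1$ is the first vector of a given adapted basis. Since $[\Lg_2,\Lg_i]\subseteq\Lg_{i+2}$, the map $\ad(e_1)$ induces $\la\,\ad(x_1)$ on $\Lg_i/\Lg_{i+1}\to\Lg_{i+1}/\Lg_{i+2}$, which is an isomorphism. Hence $\{e_1,\ldots,e_n\}$ is an adapted basis of eigenvectors of $\phi_s$, with $\phi_s(e_1)=ae_1$ and $\phi_s(e_i)=a^{i-2}b\,e_i$ for $2\le i\le n$. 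The f.p.f.\ condition gives
\[
a\neq 1,\qquad a^{\ell}b\neq 1 \quad\text{for } 0\le \ell\le n-2 .
\]

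Now consider the remaining structure constants, $[e_i,e_j]=\sum_k c_{ij}^k e_k$ for $2\le i<j$. Here $c_{ij}^k\ne0$ forces $k\ge i+j-1$ (the case $k=i+j-1$ only for the $\alpha e_n$ term) and $a^{i+j-4}b^2=a^{k-2}b$. Setting $d=k-i-j+2$, this reads $b=a^{d}$ with $-1\le d\le n-2$. If no such nonzero constant exists, then $\Lg\cong L_n$, which has the nonsingular derivation $e_1\mapsto e_1$, $e_i\mapsto (i-1)e_i$ and so is not a CNLA. Otherwise, let $D$ (for exponents $d,d'$ from two nonzero constants) be the claim that $d=d'$; this is the main step.

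To prove $D$, suppose $d\neq d'$; then $a^{d-d'}=1$, so $a$ is a root of unity of some order $m\ge2$. The eigenvalues $a^{d},a^{d+1},\ldots,a^{d+n-2}$ of $\phi_s$ on $\Lg_2$ are all different from $1$. So the $n-1$ consecutive integers $d,\ldots,d+n-2$ avoid every multiple of $m$, which forces $m\ge n$. But $|d-d'|\le n-1<m$ and $m\mid d-d'$, so $d=d'$ after all.

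With a single common value $d$, I define $D(e_1)=e_1$ and $D(e_i)=(i-2+d)e_i$ for $i\ge2$. This is diagonal, and it is a derivation precisely because every nonzero bracket satisfies $k-2+d=(i-2+d)+(j-2+d)$, i.e.\ $k=i+j-d-2$. Since $D$ is not nilpotent, $\Lg$ is not a CNLA, contradicting the hypothesis.

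The main obstacle is the uniformity claim $D$ (that all nonzero structure constants yield the same exponent $d$), together with the careful bookkeeping of the edge case $d=-1$ coming from the $\alpha e_n$ term. If the bound $m\ge n$ turns out to be off by one in that edge case, I would separately rule out $b=a^{-1}$ using $a^{n-2}b\ne1$ and $a^{0}b\ne1$.
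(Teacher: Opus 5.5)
Your argument works when $\mathrm{gr}(\Lg)\cong L_n$ (i.e.\ $\al=0$), and there it differs from the paper's Case~1. The paper splits into ``all eigenvalues distinct gives a grading'' versus ``a coincidence gives eigenvalue $1$''. You instead let the f.p.f.\ condition force every nonzero structure constant to have the same shift $d$, and you write down the non-nilpotent derivation explicitly. This is tidier. (Minor slip: the $\al e_n$-term has $k=i+j-1$, so it gives $d=+1$, not $-1$.)

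\textbf{The gap is the case $\mathrm{gr}(\Lg)\cong Q_n$,} i.e.\ $n$ even and $\al\neq 0$. The inclusion $[\Lg_2,\Lg_i]\subseteq\Lg_{i+2}$ you use holds for $i\le n-2$. It fails for $i=n-1$, because $[e_2,e_{n-1}]=\al e_n\neq 0$ while $\Lg_{n+1}=0$. So your chain only reaches $e_{n-1}\in\Lg_{n-1}\setminus\Lg_n$, and $[e_1,e_{n-1}]$ can vanish.

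This is not a technicality. Since $[e_2,e_{n-1}]$ is a nonzero element of $\Lg_n$, $\phi_s$ acts on $\Lg_n$ by $a^{n-3}b^2$. Write $[e_1,e_{n-1}]=\ga z$ with $z$ spanning $\Lg_n$; applying $\phi_s$ gives $\ga\,a^{n-3}b(b-a)=0$. Hence whenever $a\neq b$, which nothing in your argument rules out, every eigenvector $e_1\notin\Lg_2$ satisfies $[e_1,\Lg_{n-1}]=0$. Your $e_n$ is then $0$, and no adapted eigenbasis exists.

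$Q_n$ itself already shows this. The map $e_1\mapsto ae_1+(b-a)e_2$, $e_i\mapsto a^{i-2}b\,e_i$ for $2\le i\le n-1$, $e_n\mapsto a^{n-3}b^2e_n$ is an automorphism. For generic $a\neq b$ its only eigenvectors outside $\Lg_2$ are the multiples of $e_1-e_2$, and $[e_1-e_2,e_{n-1}]=0$. In particular, the eigenvalue $a^{n-2}b$ you assign to $e_n$, and the f.p.f.\ conditions built on it, are wrong in this case.

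To close the gap you need a separate treatment of this case, which the paper's Case~2 (with Lemma~\ref{6.2}) provides. Your method can be adapted as follows.
\begin{itemize}
\item Keep $e_1,\dots,e_{n-1}$, and let $e_n$ span $\Lg_n$, with eigenvalue $a^{n-3}b^2$.
\item Use only the intrinsic inclusions: $[\Lg_i,\Lg_j]\subseteq\Lg_{i+j}$ for $i,j\ge 2$, $i+j\le n$, and $[\Lg_i,\Lg_j]\subseteq\Lg_n$ for $i+j>n$.
\item Components $e_k$ with $k\le n-1$ of $[e_i,e_j]$ then give $b=a^d$ with $2\le d\le n-4$.
\item The $n-2$ eigenvalues $a^d,\dots,a^{d+n-3}$ on $\Lg_2/\Lg_n$ force $a$, if it is a root of unity, to have order at least $n-1$.
\item This yields a single $d$. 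It also excludes $e_n$-components of $[e_i,e_j]$ with $i+j\le n$, which would need $a^{n+1-i-j}=1$. Finally it excludes $[e_1,e_{n-1}]\neq 0$, which would need $a=b$, i.e.\ $a^{d-1}=1$.
\item If no component with $k\le n-1$ occurs, then $[\Lg_2,\Lg_2]\subseteq\Lg_n$, and Lemma~\ref{6.2} gives $\Lg\cong Q_n$, which is not a CNLA.
\item Otherwise $D(e_1)=e_1$, $D(e_i)=(d+i-2)e_i$ for $2\le i\le n-1$, $D(e_n)=(2d+n-3)e_n$ is the required non-nilpotent derivation.
\end{itemize}
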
  

\begin{proof}
Let $\phi$ be an automorphism of $\Lg$. We need to show that $1$ is an eigenvalue of $\phi$. We may again assume
that $\phi$ is semisimple. Because of $\phi(\Lg_i)=\Lg_i$ for all $i$, the matrix of $\phi$ with respect to
an adapted basis of $\Lg$ is lower-triangular, with eigenvalues $\la_i$, $1\le i\le n$ on the diagonal.
We will show that every $\la_i$ is a power of $\la_1$. For this, we can pass to an adapted basis
$\{e_1',\ldots ,e_n' \}$ with $\phi(e_2')=\la_2e_2'$ as follows: there exists an eigenvector $e_2'=e_2+x_2$ of
$\phi$ corresponding to $\la_2$, where $x_2\in \Lg_3={\rm span}\{e_3,\ldots ,e_n\}$. Define $e_1'=e_1$ and
$e_{i+1}'=[e_1,e_i']$ inductively for all $i\ge 2$. Then we can write $e_i'=e_i+x_i$ with $x_i\in \Lg_{i+1}$ and $x_n=0$.
We have
\[
[e_2',e_3']=[e_2,e_3]+[e_2,x_3]+[x_2,e_3]+[x_2,x_3]\in \Lg_5,
\]
so that the basis is adapted. For convenience, let us rewrite this basis again as $\{e_1,\ldots ,e_n \}$
with $\phi(e_2)=\la_2e_2$. We know that either ${\rm gr}(\Lg)\cong L_n$, or ${\rm gr}(\Lg)\cong Q_n$. \\[0.2cm]
{\em Case 1:} We have ${\rm gr}(\Lg)\cong L_n$. We have $\phi(e_3)=\la_3e_3+y_3$ with $y_3\in \Lg_4$, and also
\begin{align*}
\phi(e_3) & = \phi([e_1,e_2])=[\phi(e_1),\phi(e_2)] \\
          & = [\la_1e_1+y_1,\la_2e_2] \\
          & = \la_1\la_2e_3+\la_2[y_1,e_2]
\end{align*}
with $y_1\in \Lg_2$ and $[y_1,e_2]\in \Lg_4$. This implies $\la_3=\la_1\la_2$. By induction it follows that
\[
\la_i=\la_1^{i-2}\la_2
\]
for all $i\ge 3$. Since $[\Lg_i,\Lg_j]\subseteq \Lg_{i+j}$ for the case ${\rm gr}(\Lg)\cong L_n$, we have
$[e_2,e_{n-1}]=0$. On the other hand, it is impossible that $[e_2,e_k]=0$ for all $k\ge 3$, because
otherwise one inductively obtains $[e_i,e_j]=0$ for all $i,j\ge 2$, so that $\Lg\cong L_n$. This is a contradiction, since
$L_n$ is not a CNLA. Let $j$ be the largest index such that
\[
[e_2,\Lg_2]\subset \Lg_j.
\]
Then we have $[e_2,e_i]=\al e_j+x$ for some $i\ge 3$, with $\al\neq 0$, $x\in \Lg_{j+1}$ and $[e_2,e_k]\in \Lg_j$
for all $k\ge 3$. We also can fix the largest $i$ such that $[e_2,e_i]=\al e_j+x$ with $\al \neq 0$ and $x\in \Lg_{j+1}$.
It follows that $j-i\ge 2$, because we have $[\Lg_2,\Lg_i]\subseteq \Lg_{i+2}$. Applying $\phi$ we obtain
\begin{align*}
[\phi(e_2),\phi(e_i)] & = \phi([e_2,e_i]) \\
        & = \al\phi(e_j)+\phi(x) \\  
        & = \al\la_je_j+z,
\end{align*}
with $z\in \Lg_{j+1}$. 
On the other hand, we have (using $[e_2,\Lg_{i+1}]\subseteq \Lg_{j+1}$)
\begin{align*}
[\phi(e_2),\phi(e_i)] & = [\la_2 e_2,\la_ie_i+y] \mbox{ (for some $y \in \Lg_{i+1}$)}\\
                     & = \la_2\la_i[e_2,e_i]+ \la_2 [e_2,y] \\
                     & = \al\la_2\la_ie_j+w                         
\end{align*}
with $w\in \Lg_{j+1}$. It follows that $\la_2\la_i=\la_j$. Together with $\la_k=\la_1^{k-2}\la_2$ for all $k\geq 2$
this yields $\la_2^2\la_1^{i-2}=\la_2\la_1^{j-2}$, and hence $\la_2=\la_1^{j-i}$.
So the eigenvalues $\la_1,\ldots ,\la_n$ are given by
\[
(\la_1,\ldots ,\la_n)=(\la_1,\la_1^{j-i},\la_1^{j-i+1},\ldots ,\la_1^{j-i+n-2}).
\]
We claim that not all eigenvalues are different. To show this, assume that they are all different, and let
$\Lh_k$ be the eigenspace corresponding to $\la_1^k$ for $k=1,j-i,j-i+1,\ldots ,j-i+n-2$.
Then we have
\[
\Lg=\Lh_1\oplus \Lh_{j-i}\oplus \Lh_{j-i+1}\oplus \cdots \Lh_{j-i+n-2}.
\]
However, this is a positive grading of $\Lg$. For $x\in \Lh_k$ and $y\in \Lh_{\ell}$ we have
\[
\phi([x,y]) =[\phi(x),\phi(y)]=[\la_1^kx,\la_1^{\ell}y]=\la_1^{k+\ell}[x,y],
\]
so that $[x,y]\in \Lh_{k+\ell}$. It follows that $\Lg$ is a not a CNLA, which is a contradiction. Hence we have
$\la_k=\la_{\ell}$ for some $k<\ell$. \\
Assume first that $k\ge 2$. Then we have $\la_1^{j-i+k-2}=\la_1^{j-i+\ell-2}$,
so that $\la_1^{\ell-k}=1$. Let $r=\ell-k\le n-2$.
It follows that the eigenvalue $\la_1$ is an $r$-th root of unity, and the eigenvalues
$\la_1^{j-i},\la_1^{j-i+1},\ldots ,\la_1^{j-i+n-2}$
form a sequence of $n-1$ consecutive powers of $\la_1$, so that at least one of them must be equal to $1$. \\
Now assume that $k=1$, i.e., $\la_1=\la_{\ell}$ for some $\ell>1$. Then $\la_{\ell}=\la_1^{j-i+\ell-2}=\la_1$ and hence 
$\la_1^{j-i+\ell -3}=1$.
If $\ell >2 $ it follows that 
 $\la_{\ell-1}=1$ in which case we are done. If $\ell=2$, this shows that $\lambda_1^{j-i-1}=1$, but also that the sequence of eigenvalues becomes
 \[ (\la_1, \la_2=\la_1, \la_3= \la_1^2,\ldots , \la_n=\la_1^{n-1})\]
 As $2 \leq j-i\leq n-3$, we have that $\la_{j-i}= \la_1^{j-i-1}=1$ is one of the eigenvalues of $\varphi$.
 This concludes the first case. \\[0.2cm]
{\em Case 2:} We have ${\rm gr}(\Lg)\cong Q_n$.
Then we have $[e_2,e_{n-1}]=e_n$, and $\phi$ induces an automorphism on $\Lg/\Lg_n$ with lower-triangular matrix
having eigenvalues $\la_1,\ldots ,\la_{n-1}$ with respect to the basis $\{ e_1+\Lg_n,\ldots ,e_{n-1}+\Lg_n\}$
of $\Lg/\Lg_n$. Clearly we have that ${\rm gr}(\Lg/\Lg_n)\cong L_{n-1}$, where $n-1$ is odd. On the other hand,
suppose that $[\Lg_2,\Lg_2]\subseteq \Lg_n$. Then $\Lg\cong Q_n$ by Lemma $\ref{6.2}$, which is impossible, since
$Q_n$ is not a CNLA. Hence $[\Lg_2,\Lg_2]\not\subseteq \Lg_n$ and $[(\Lg/\Lg_n)_2,(\Lg/\Lg_n)_2]\neq 0$. This implies
$\Lg/\Lg_n\not\cong L_{n-1}$. By the arguments of case $1$ we obtain
\[
(\la_1,\ldots ,\la_{n-1})=(\la_1,\la_1^k,\la_1^{k+1},\ldots , \la_1^{k+n-3})
\]
for the first $n-1$ eigenvalues of $\phi$, with $2\le k\le n-4$. Applying $\phi$ to $[e_2,e_{n-1}]=e_n$
yields
\begin{align*}
\la_ne_n & =\phi(e_n)=[\phi(e_2),\phi(e_n)] \\
         & = [\la_2e_2,\la_{n-1}e_{n-1}]=\la_2\la_{n-1}e_n,
\end{align*}
so that $\la_n=\la_1^{2k+n-3}$ for the last eigenvalue of $\phi$. Now we can proceed as in case $1$.
It follows that not all eigenvalues are different, because $\Lg$ is not graded. Hence we obtain
$\la_i=\la_j$ for some $1\leq i<j\leq n $.\\
If $j<n$, then the induced automorphism $\ov{\phi}$ on
$\Lg/\Lg_n$ also has two equal eigenvalues. Again we can apply the arguments of case $1$ to see that
$\ov{\phi}$ has an eigenvalue equal to $1$. Hence $\phi$ has an eigenvalue $1$ as well. So we are left with the case
that $\la_i=\la_n$ for some $1\le i<n$ (and $\la_i \neq \la_j$ for $1\leq i < j \leq n-1$). Assume first that $i\ge 2$. Then $\la_1^{k+i-2}=\la_1^{2k+n-3}$, which implies
$\la_1^{k+(n+1-i)-2}=1$, so that $\la_{n+1-i}=1$. Now let $i=1$, i.e., we have $\la_n=\la_1$. We will again show that $\Lg$ admits a positive grading, which is a contradiction.

$e_n$ is one of the eigenvectors of $\phi$ with eigenvalue $\la_1=\la_n$. We can find another eigenvector with eigenvalue $\lambda_1$ which is of the form $e_1+x$ with $x\in \Lg_2$. Now let $\Lh_1=\langle e_1+x \rangle$, $\Lh_{2k + n-3}=\langle e_n\rangle$ and $\Lh_i$ be the eigenspace corresponding to eigenvalue $\lambda_i= \lambda_1^i$ for $k\leq i \leq k+n-3$. One can now check that 
\[ \Lg = \Lh_1 \oplus \Lh_k \oplus \Lh_{k+1} \oplus \cdots\Lh_{k+n-3} \oplus \Lh_{2k +n -3} \]
is a positive grading of $\Lg$ (using the fact that  $[ \Lg,\Lg]\subseteq \Lh_k \oplus \Lh_{k+1} \oplus \cdots\Lh_{k+n-3} \oplus \Lh_{2k +n -3}$ ). But this implies again that $\Lg$ is not a CNLA, which concludes the proof. 

\end{proof}  

Finally we are ready to state our main result for complex filiform Lie algebras.
Recall that a Lie algebra
$\Lg$ is called a {\em derived algebra}, if $\Lg=[\Lh,\Lh]$ for some Lie algebra $\Lh$.

\begin{thm}
Let $\Lg$ be a  complex filiform Lie algebra. Then the following statements are equivalent.
\begin{itemize}
\item[(1)] $\Lg$ admits an f.p.f.\ automorphism.
\item[(2)] $\Lg$ is not a CNLA.
\item[(3)] $\Lg$ admits a nonsingular derivation.
\item[(4)] $\Lg$ is a derived Lie algebra. 
\end{itemize}
\end{thm}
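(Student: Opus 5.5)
The proof will establish the cycle $(1)\Rightarrow(2)\Rightarrow(3)\Rightarrow(1)$ and then link $(4)$ to $(3)$ through the result of Siciliano and Towers \cite{SIT}.

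The implication $(1)\Rightarrow(2)$ is exactly the contrapositive of Proposition~\ref{6.4}: a filiform CNLA has no f.p.f.\ automorphism.

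For $(2)\Rightarrow(3)$, suppose $\Lg$ is not a CNLA. Then $\Der(\Lg)$ contains a non-nilpotent derivation, and its semisimple part is again a nonzero derivation. Hence $\Lg$ has a nonzero torus and rank $1$ or $2$. We then use the classification of filiform Lie algebras of positive rank by Goze and Hakimjanov (cf.\ \cite{GVE}). Each such algebra admits an adapted basis diagonalizing a torus, with
\[
D(e_1)=a e_1,\qquad D(e_i)=(b+(i-2)a)e_i \quad (2\le i\le n),
\]
where $b$ is the weight of $e_2$. The bracket $[e_i,e_j]$ has nonzero component along $e_k$ only when the weights satisfy $w_i+w_j=w_k$. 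This imposes finitely many linear conditions on $(a,b)$, all of the form $b=ma$ for a fixed integer $m\ge 1$; in rank $2$ there are no conditions.

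\begin{itemize}
\item In rank $2$ ($L_n$ or $Q_n$) we take $a=b=1$ for $L_n$. For $Q_n$ we take $D(e_1)=e_1$, $D(e_i)=(i-1)e_i$ for $2\le i\le n-1$ and $D(e_n)=ne_n$ (using $b=2$ gives a consistent grading). All eigenvalues are positive.
\item In rank $1$ we may take $a=1$ and $b=m\ge 1$, so all weights are positive integers.
\end{itemize}

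In either case $D$ is nonsingular.

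For $(3)\Rightarrow(1)$, take a nonsingular derivation $D$ and replace it by its semisimple part, which is still a nonsingular derivation. The more robust route uses the grading just constructed, $\Lg=\bigoplus_{k\ge1}\Lg_{(k)}$ with $[\Lg_{(k)},\Lg_{(\ell)}]\subseteq\Lg_{(k+\ell)}$. Choose $t\in\C^\times$ that is not a root of unity, and set $\phi=t^k$ on $\Lg_{(k)}$. Then $\phi$ is an automorphism, and it is f.p.f.\ because $t^k\neq 1$. Alternatively, $\phi=\exp(sD)$ works for generic $s$.

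The argument is cleaner if phrased as $(2)\Rightarrow$ ``$\Lg$ has a positive integer grading'' $\Rightarrow (1),(3)$. Integrality of the weights is exactly what the classification step supplies.

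For $(3)\Leftrightarrow(4)$, we invoke \cite{SIT}. There Siciliano and Towers show that a nilpotent Lie algebra with a nonsingular derivation is a derived algebra: take $\Lh=\Lg\rtimes\langle D\rangle$, so $[\Lh,\Lh]\supseteq D(\Lg)=\Lg$. Conversely, they show that a derived nilpotent algebra is not a CNLA, which for filiform algebras they tie to positive gradings. Combined with $(2)\Rightarrow(3)$, this closes the cycle.

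The main obstacle is $(2)\Rightarrow(3)$. It requires knowing that every non-CNLA filiform algebra carries a torus whose weights are all nonzero, not merely some non-nilpotent derivation. I would first attempt this with the structure of maximal tori of rank $1$: show directly that $a\neq0$, since otherwise $D$ would act by a scalar on $\Lg_2$ and be nilpotent-compatible only trivially. Then check that the constraint $b=ma$ coming from a nonzero bracket $[e_2,e_i]\ni e_j$ forces $m=j-i-1\ge1$, mirroring the exponent computation $\la_2=\la_1^{j-i}$ in the proof of Proposition~\ref{6.4}.
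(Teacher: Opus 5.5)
Your skeleton matches the paper's. $(1)\Rightarrow(2)$ is Proposition~\ref{6.4}, $(2)\Rightarrow(3)$ rests on the torus theory of filiform algebras in \cite{GVE}, $(3)\Rightarrow(1)$ is $\exp(sD)$ for generic $s$, and $(4)$ comes from \cite{SIT}. Your remark that $\Lh=\Lg\rtimes\langle D\rangle$ satisfies $[\Lh,\Lh]=D(\Lg)=\Lg$ is a correct, self-contained proof of $(3)\Rightarrow(4)$.

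The problem is the detail you add to $(2)\Rightarrow(3)$, which is wrong whenever ${\rm gr}(\Lg)\cong Q_n$.

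First, your derivation of $Q_n$ is not a derivation. The brackets $[e_1,e_{n-1}]=e_n$ and $[e_2,e_{n-1}]=e_n$ both force the weight of $e_n$ to be $n-1$, not $n$. Also, no choice $b\neq a$ is compatible with $[e_i,e_{n+1-i}]=(-1)^ie_n$. Take $\diag(1,1,2,\ldots,n-1)$ instead.

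Second, and more seriously, in an adapted basis of any $\Lg$ with ${\rm gr}(\Lg)\cong Q_n$ one has $[e_2,e_{n-1}]=\pm\al e_n$ with $\al\neq 0$. So every derivation of your diagonal form satisfies $b=a$, and ``in rank $2$ there are no conditions'' is false. The second torus direction of $Q_n$ is the derivation of Lemma~\ref{6.2}. It is diagonal only in the non-adapted basis $\{e_1-e_2,e_2,\ldots,e_n\}$, with weights $(0,1,\ldots,1,2)$.

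Worse, if $\Lg\not\cong Q_n$ but ${\rm gr}(\Lg)\cong Q_n$, then no nonzero derivation is diagonal in an adapted basis at all. Indeed, $b=a$ would give weights $(1,1,2,\ldots,n-1)$. These kill every $[e_i,e_j]$ with $i,j\ge 2$ except $[e_i,e_{n+1-i}]$, forcing $\Lg\cong Q_n$. For such algebras the torus weights are $(a,b,b+a,\ldots,b+(n-3)a,2b+(n-3)a)$, on an eigenbasis starting with some $e_1+x$, $x\in\Lg_2$, exactly as in Case~2 of Proposition~\ref{6.4}.

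The conclusion survives. Pass to $\Lg/\Lg_n$; by Lemma~\ref{6.2} it has a nonzero bracket in $(\Lg/\Lg_n)_2$, which forces $a\neq 0$ and $b=ma$ with $m\ge 2$. Alternatively, simply cite Proposition~1 of \cite{GVE}, as the paper does. But your argument as written does not cover this case. Also, $w_2+w_i=w_j$ gives $b=(j-i)a$, not $(j-i-1)a$.

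Separately, do not ascribe to \cite{SIT} the general claim that a nilpotent derived algebra is never characteristically nilpotent. That is false in general: Luks constructed a characteristically nilpotent Lie algebra that is a derived algebra. The implication $(4)\Rightarrow(2)$ is specific to filiform algebras. The paper takes it from Proposition~2.14 of \cite{SIT}, and you should cite it in that form rather than as a general principle.
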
  

\begin{proof}
$(1)\Rightarrow (2):$ This follows from Proposition $\ref{6.4}$. \\[0.2cm]
$(2)\Rightarrow (3):$ This follows from Proposition $1$ in \cite{GVE}. \\[0.2cm]
$(3)\Rightarrow (1):$ Let $D\colon \Lg\ra \Lg$ be a nonsingular derivation. Note that this implies that
$\Lg$ is nilpotent, but we will not need this. Let $\la_1,\ldots ,\la_n$ be the eigenvalues of $D$. They are
all non-zero by assumption. For $r\in \Q^{\times}$, also $rD$ is a nonsingular derivation. By replacing $D$
with $rD$ for some $r>0$ small enough, we can assume that $\exp(\la_i)\neq 1$ for all $i$. Then
$\exp(D)$ is an automorphism of $\Lg$ with eigenvalues different from $1$, and hence an f.p.f.\ automorphism
of $\Lg$.  \\[0.2cm]
$(4)\Leftrightarrow (2):$ This follows from Proposition $2.14$ in \cite{SIT}. 
\end{proof}

\section*{Acknowledgments}
Dietrich Burde is supported by the FWF, the Austrian Science Foun\-da\-tion, with the 
grant DOI 10.55776/P33811. 
Karel Dekimpe is supported by Methusalem grant Meth/21/03, a long term structural funding of the Flemish Government.
The authors would like to thank the Erwin Schrödinger International Institute for Mathematics and Physics (ESI)
for its hospitality during their research stays.
For open access purposes, the authors have applied a CC BY public copyright license to any author-accepted manuscript
version arising from this submission.

\end{document}